\newcommand{\simple}{1 }
\newcommand{\gen}{2 }
\newcommand{\genns}{2}
\newcommand{\examplex}{3 }
\newcommand{\examplexns}{3}
\newcommand{\wfN}{4 }
\newcommand{\wfNNN}{5 }
\newcommand{\wfNNNns}{5}
\newcommand{\ramsey}{6 }
\newcommand{\trramsey}{7 }
\newcommand{\matone}{8 }
\newcommand{\mattwo}{9 }
\newcommand{\matthree}{10 }
\newcommand{\llex}{<_{\rm lex}}
\newcommand{\mat}[1]{ \begin{pmatrix}  #1 \end{pmatrix}}
\newcommand{\mip}{MIP }
\newcommand{\mipit}{{\it MIP }}
\newcommand{\mipns}{MIP}
\newcommand{\compseg}{computational segment }
\newcommand{\compsegns}{computational segment}
\newcommand{\compsegs}{computational segment\ }
\newcommand{\While}{{\bf While }}
\newcommand{\inp}{{\bf Input}}
\renewcommand{\and}{ \hbox{  and }  }
\newcommand{\cc}[1]{c_{#1}}
\newcommand{\ii}[1]{i_{#1}}
\newcommand{\clos}{{\rm clos}}
\newcommand{\w}{w}
\newcommand{\x}{x}
\newcommand{\y}{y}
\newcommand{\z}{z}
\newcommand{\Erdos}{Erd{\"o}s }
\newcommand{\ang}[1]{\langle#1\rangle}
\newcommand{\cvg}{\downarrow}
\newcommand{\goes}{\rightarrow}
\newcommand{\R}{{\sf R}}
\newcommand{\Z}{{\sf Z}}
\newcommand{\nat}{{\sf N}}
\newcommand{\xvec}[1]{\ifcase 3{#1} {\ang {x_1,x_2,x_3} } \else 
\ifcase 4{#1} {\ang{x_1,x_2,x_3,x_4}} \else {\ang {x_1,\ldots,x_{#1}}}\fi\fi}
\newcommand{\yvec}[1]{\ifcase 3{#1} {\ang {y_1,y_2,y_3} } \else 
\ifcase 4{#1} {\ang{y_1,y_2,y_3,y_4}} \else {\ang {y_1,\ldots,y_{#1}}}\fi\fi}
\newcommand{\zvec}[1]{\ifcase 3{#1} {\ang {z_1,z_2,z_3} } \else 
\ifcase 4{#1} {\ang{z_1,z_2,z_3,z_4}} \else {\ang {z_1,\ldots,z_{#1}}}\fi\fi}
\newcommand{\vecc}[2]{\ifcase 3{#2} {\ang { {#1}_1,{#1}_2,{#1}_3 } } \else
\ifcase 4{#1} {\ang { {#1}_1,{#1}_2,{#1}_3,{#1}_{4} } }
\else {\ang { {#1}_1,\ldots,{#1}_{#2}}}\fi\fi}
\newcommand{\veccd}[3]{\ifcase 3{#2} {\ang { {#1}_{{#3}1},{#1}_{{#3}2},{#1}_{{#3}3} } } \else
\ifcase 4{#1} {\ang { {#1}_{{#3}1},{#1}_{{#3}2},{#1}_{#3}3},{#1}_{{#3}4} }
\else {\ang { {#1}_{{#3}1},\ldots,{#1}_{{#3}{#2}}}}\fi\fi}
\newcommand{\veccz}[2]{\ifcase 3{#2} {\ang { {#1}_0,{#1}_2,{#1}_3 } } \else
\ifcase 4{#1} {\ang { {#1}_0,{#1}_2,{#1}_3,{#1}_{4} } }
\else {\ang { {#1}_0,\ldots,{#1}_{#2}}}\fi\fi}
\newcommand{\xve}[1]{\ifcase 3{#1} {x_1,x_2,x_3} \else 
\ifcase 4{#1} {x_1,x_2,x_3,x_4} \else {x_1,\ldots,x_{#1}}\fi\fi}
\newcommand{\yve}[1]{\ifcase 3{#1} {y_1,y_2,y_3} \else 
\ifcase 4{#1} {y_1,y_2,y_3,y_4} \else {y_1,\ldots,y_{#1}}\fi\fi}
\newcommand{\zve}[1]{\ifcase 3{#1} {z_1,z_2,z_3} \else 
\ifcase 4{#1} {z_1,z_2,z_3,z_4} \else {z_1,\ldots,z_{#1}}\fi\fi}
\newcommand{\ve}[2]{\ifcase 3#2 {{#1}_1,{#1}_2,{#1}_3} \else
\ifcase 4#2 {{#1}_1,{#1}_2,{#1}_3,{#1}_{4}}
\else {{#1}_1,\ldots,{#1}_{#2}}\fi\fi}
\newcommand{\ved}[3]{\ifcase 3#2 {{#1}_{{#3}1},{#1}_{{#3}2},{#1}_{{#3}3}} \else
\ifcase 4#2 {{#1}_{{#3}1},{#1}_{{#3}2},{#1}_{{#3}3},{#1}_{{#3}4}}
\else {{#1}_{{#3}1},\ldots,{#1}_{{#3}{#2}}}\fi\fi}
\newcommand{\fuve}[3]{
\ifcase 3#2
{{#3}({#1}_1),{#3}({#1}_2,{#3}({#1}_3)} \else
\ifcase 4#2
{{#3}({#1}_1),{#3}({#1}_2),{#3}({#1}_3),{#3}({#1}_4)}
\else
{{#3}({#1}_1),\ldots,{#3}({#1}_{#2})}\fi\fi}
\newcommand{\setmathchar}[1]{\ifmmode#1\else$#1$\fi}
\newcommand{\vlist}[2]{%
	\setmathchar{%
		\compound#2\one{#2}\two
		\ifcompound
			({#1}_1,\ldots,{#1}_{#2})
		\else
			\ifcat N#2
				({#1}_1,\ldots,{#1}_{#2})
			\else
				\ifcase#2
					({#1}_0)\or
					({#1}_1)\or
					({#1}_1,{#1}_2)\or 
					({#1}_1,{#1}_2,{#1}_3)\or
					({#1}_1,{#1}_2,{#1}_3,{#1}_4)\else 
					({#1}_1,\ldots,{#1}_{#2})
				\fi
			\fi
		\fi}}
\newif\ifcompound
\def\compound#1\one#2\two{%
	\def\one{#1}
	\def\two{#2}
	\if\one\two
		\compoundfalse
	\else
		\compoundtrue
	\fi}
\newcommand{\xwe}[1]{\ifcase 3{#1} {x_1\wedge x_2\wedge x_3} \else 
\ifcase 4{#1} {x_1\wedge x_2\wedge x_3\wedge x_4} \else {x_1\wedge \cdots \wedge
x_{#1}}\fi\fi}
\newcommand{\we}[2]{\ifcase 3#2 {\ang { {#1}_1\wedge {#1}_2\wedge {#1}_3 } } \else
\ifcase 4{#1} {\ang { {#1}_1\wedge {#1}_2\wedge {#1}_3\wedge {#1}_{4} } }
\else {\ang { {#1}_1\wedge \cdots\wedge {#1}_{#2}}}\fi\fi}
\newcommand{\st}{\mathrel{:}}
\newcommand{\into}{\rightarrow}
\newcommand{\es}{\emptyset}
\newcommand{\floor}[1]{\left\lfloor{#1}\right\rfloor}
\newcommand{\union}{\cup}
\newcommand{\monus}{\;\raise.5ex\hbox{{${\buildrel
    \ldotp\over{\hbox to 6pt{\hrulefill}}}$}}\;}
\newcommand{\infinity}{\infty}
\newcounter{savenumi}
\newtheorem{theoremfoo}{Theorem}[section] 
\newenvironment{theorem}{\pagebreak[1]\begin{theoremfoo}}{\end{theoremfoo}}
\newtheorem{lemmafoo}[theoremfoo]{Lemma}
\newenvironment{lemma}{\pagebreak[1]\begin{lemmafoo}}{\end{lemmafoo}}
\newtheorem{conjecturefoo}[theoremfoo]{Conjecture}
\newtheorem{conventionfoo}[theoremfoo]{Convention}
\newenvironment{convention}{\pagebreak[1]\begin{conventionfoo}\rm}{\end{conventionfoo}}
\newtheorem{porismfoo}[theoremfoo]{Porism}
\newtheorem{gamefoo}[theoremfoo]{Game}
\newtheorem{corollaryfoo}[theoremfoo]{Corollary}
\newtheorem{openfoo}[theoremfoo]{Open Problem}
\newtheorem{exercisefoo}{Exercise}
\newcommand{\fig}[1] 
{
 \begin{figure}
 \begin{center}
 \input{#1}
 \end{center}
 \end{figure}
}
\newtheorem{potanafoo}[theoremfoo]{Potential Analogue}
\newtheorem{notefoo}[theoremfoo]{Note}
\newenvironment{note}{\pagebreak[1]\begin{notefoo}\rm}{\end{notefoo}}
\newtheorem{notabenefoo}[theoremfoo]{Nota Bene}
\newtheorem{nttn}[theoremfoo]{Notation}
\newenvironment{notation}{\pagebreak[1]\begin{nttn}\rm}{\end{nttn}}
\newtheorem{empttn}[theoremfoo]{Empirical Note}
\newtheorem{examfoo}[theoremfoo]{Example}
\newenvironment{example}{\pagebreak[1]\begin{examfoo}\rm}{\end{examfoo}}
\newtheorem{dfntn}[theoremfoo]{Def}
\newenvironment{definition}{\pagebreak[1]\begin{dfntn}\rm}{\end{dfntn}}
\newtheorem{propositionfoo}[theoremfoo]{Proposition}
\newenvironment{proof}
    {\pagebreak[1]{\narrower\noindent {\bf Proof:\quad\nopagebreak}}}{\QED}
\newenvironment{sketch}
    {\pagebreak[1]{\narrower\noindent {\bf Proof sketch:\quad\nopagebreak}}}{\QED}
\newcommand{\yyskip}{\penalty-50\vskip 5pt plus 3pt minus 2pt}
\newcommand{\blackslug}{\hbox{\hskip 1pt
        \vrule width 4pt height 8pt depth 1.5pt\hskip 1pt}}
\newcommand{\QED}{{\penalty10000\parindent 0pt\penalty10000
        \hskip 8 pt\nolinebreak\blackslug\hfill\lower 8.5pt\null}
        \par\yyskip\pagebreak[1]}
\newcommand{\BBB}{{\penalty10000\parindent 0pt\penalty10000
        \hskip 8 pt\nolinebreak\hbox{\ }\hfill\lower 8.5pt\null}
        \par\yyskip\pagebreak[1]}
\newtheorem{factfoo}[theoremfoo]{Fact}
\newenvironment{block}{\begin{list}{\hbox{}}{\leftmargin 1em
    \itemindent -1em \topsep 0pt \itemsep 0pt \partopsep 0pt}}{\end{list}}
\begin{document}

\lstset{language=Pascal}

\newcommand{\KN}{K_{\nat}}
\newcommand{\NRE}{\hbox{NUM-RED-EDGES\ }}
\newcommand{\NBE}{\hbox{NUM-BLUE-EDGES\ }}
\newcommand{\RED}{\hbox{RED\ }}
\newcommand{\BLUE}{\hbox{BLUE\ }}
\newcommand{\REDns}{\hbox{RED}}
\newcommand{\BLUEns}{\hbox{BLUE}}

\begin{center}
{\bf Proving Programs Terminate using}\\
{\bf Well-Founded Orderings, Ramsey's Theorem, and Matrices}
\end{center}

\centerline{by William Gasarch}

\begin{abstract}
Many programs allow the user to input data several times
during its execution. If the program runs forever the user
may input data infinitely often.
A program terminates if it terminates no matter what the user does.

We discuss various ways to prove that program terminates.
The proofs use well-founded orders, Ramsey Theorem, and matrices.
These techniques are used by real program checkers.
\end{abstract}

\noindent
{\bf General Terms:} Verification, Theory.

\noindent
{\bf Keywords and Phrase:} Proving programs terminate,
Well Orderings, Ramsey Theory, Matrices.

\section{Introduction}\label{se:intro}

We describe several ways to prove that programs terminate.
By this we mean terminate on {\it any} sequence of inputs.
The methods employed are well-founded orders, Ramsey's theorem, and matrices.
This paper is self contained; it does not require knowledge of any of these topics 
or of programming languages.
The methods we describe are used by real program checkers.

Our account is based on the articles of 
B. Cook, Podelski,Rybalchenko~\cite{CPRabs,CPRterm,proveterm,PRrank,ramseypl,PRtrans,DBLP:conf/tacas/PodelskiR11}
Lee, Jones, Ben-Amram~\cite{Lee:ranking,LJA}.
Termination checkers that use the methods discussed in this paper include:
\begin{enumerate}
\item
Loopfrog \url{http://www.verify.inf.unisi.ch/loopfrog/termination}.
\item
Terminator.
\url{http://www7.in.tum.de/~rybal/papers/}.
\item
ACL2 
\url{http://acl2s.ccs.neu.edu/acl2s/doc/}.
(Applicative common lisp 2).  
\item
AProVE 
\url{http://aprove.informatik.rwth-aachen.de/}.
(Automatic program verification environment).  
\item
Julia \url{http://julia.scienze.univr.it/}.
\end{enumerate}

\begin{convention}
The statement {\it The Program Terminates} means that it terminates
no matter what the user does. The user will be supplying inputs
as the program runs; hence we are saying that the user cannot
come up with some (perhaps malicious) inputs that make the program
run forever. A more realistic scenario is if the programs input is a sequence
of requests for devices. 
\end{convention}

In Section~\ref{se:notation} we establish a standard notation.
In Sections~\ref{se:simpleord},\ref{se:compordering} we prove particular programs
terminate using well founded orderings. In Section~\ref{se:wfgen} we present a general
theorem that encapsulates the technique of using well founded orderings.
In Section~\ref{se:useramsey} we prove a program terminates by using Ramsey Theory.
In Section~\ref{se:ramseygen} we prove a general theorem that encapsulates the
technique using Ramsey Theory.
In Sections~\ref{se:matrix},\ref{se:morematrix} we use Ramsey Theory and Matrices to prove
particular programs terminate, and also state a general theorem that encapsulates the technique
In Sections~\ref{se:useramsey2} and \ref{se:sub} we use Ramsey Theroy and invariants to
prove particular programs terminate.

All of the results are about showing particular types of prgrams can be be proven to
terminate. In Section~\ref{se:dec} we state (without proof) many theorems about particular
types of prgrams for which one can decide if the program terminates.

In Section~\ref{se:need} we discuss informally how much Ramsey Theory we need.
In particular, in most cases, the transitive Ramsey Theorem (which is a weaker version
of Ramsey Theory) suffices.

In the appendix we give some strange examples of programs and the proofs that they
terminate, and then give a tutorial on Ramsey's Theorem and the Transitive Ramsey Theorem.

\section{Notation and Definitions}\label{se:notation}

\begin{notation}~
\begin{enumerate}
\item
$\nat$ is the set $\{0,1,2,3,\ldots,\}$.
\item
$\Z$ is the set of integers, $\{\ldots, -2,-1,0,1,2,\ldots \}$.
\item
$\R$ is the set of reals.
\end{enumerate}
\end{notation}

\begin{notation}~
\begin{enumerate}
\item
In a program the command

\qquad $\x = \inp(\Z)$

\noindent
means that $\x$ gets an integer provided by the user.
\item
More generally, if $A$ is any set, then 

\qquad $\x = \inp(A)$

\noindent
means that $\x$ gets a value from A provided by the user.
\item
If we represent the set A by listing it out we will write
(for example)

\qquad $\x=\inp(\y,\y+2,\y+4,\y+6,\ldots)$

\noindent
rather than the proper but cumbersome

\qquad $x=\inp(\{y,y+2,y+4,y+6,\ldots\})$
\end{enumerate}

\noindent
In a program the command

\qquad $(\x,\y,\z) = (\inp(\Z),\inp(\nat),\inp(\nat))$

\noindent
means that $\x$ gets an integer provided by the user,
$\y$ gets a natural provied by the user, and
$\z$ gets a natural provied by the user.
One can generalize this to longer vectors of variables.

\noindent
In a program the command

\qquad $(\x,\y,\z) = (\y-\z,\x+\y+\z,\inp(\Z)$

\noindent
means that {\it simultaneously} $\x$ gets $\y-\z$,
$\y$ gets $\x+\y+\z$, and $\z$ gets an integer provided
by the user.
One can generalize this to longer vectors of variables and
any computable functions of them. 

\end{notation}

All of the programs we discuss do the following:
initially the variables get values supplied by the user,
then there is a \While loop. Within the \While loop the user can
specify which one of a set of statements get executed through the use of a variable called
{\it control}.
We focus on these programs for two reasons:
(1) programs of this type are a building block for more complicated programs,
and
(2) programs of this type will illustrate our points well.
One drawback is that the programs we present will not do anything of interest.

\begin{lstlisting}[frame=single,mathescape,float,title={Program~\simple}]
$(x,y,z) = (\inp(\nat),\inp(\nat),\inp(\nat))$
While $x>0$ and $y>0$ and $z>0$ 
	control = $\inp(1,2,3)$
	if control == 1
		$(x,y,z)=(x^2+10,y-x,z-10)$
	else
	if control == 2
		$(x,y,z)=(y^2+17,y-z^2,x-y)$
	else
	if control == 3
		$(x,y,z)=(y+17,xyz,x+y+z)$

\end{lstlisting}

\begin{example}~
\begin{enumerate}
\item
Program~\simple does not terminate since the user can set $(x,y,z)=(1,1,1)$ and
then keep setting control==3.
\item
Let $n,m\in\nat$.
Let $g_{i}$ as $1\le i\le m$ be  computable functions from $\Z^{n+1}$ to $\Z^n$.
These functions are used in Program~\gen which is very general.
 All of the programs in this paper will essentially be of this type.
\end{enumerate}
\end{example}

\bigskip
\bigskip
\bigskip

\begin{lstlisting}[frame=single,mathescape,title={Program~\gen}]
Comment: $X$ is $(x[1],\ldots,x[n])$
Comment: The $g_i$ are computable functions from $\Z^{n+1}$ to $\Z^n$
$X = (\inp(\Z),\inp(\Z),\ldots,\inp(\Z))$
While $x[1]>0$ and $x[2]>0$ and $\cdots$ and $x[n]>0$
 control = $\inp(1,2,3,...,m)$
  if control==$1$
   $X = g_{1}(X,\inp(\Z))$
  else
  if control==$2$
   $X = g_{2}(X,\inp(\Z))$
  else
   .
   .
   .
  else
  if control==$m$
   $X =g_{m}(X,\inp(\Z))$
\end{lstlisting}

We define this type of program formally.
We call it a {\it program} though it is actually a program of this
restricted type.
We also give intuitive comments in parenthesis.

\begin{definition}~
\begin{enumerate}
\item
A {\it program} is a tuple $(S,I,R)$ where the following hold.
\begin{itemize}
\item
$S$ is a decidable set of states. 
(If $(\x_1,\ldots,\x_n)$ are the variables in a program and they are of types 
$T_1,\ldots,T_n$ then $S=T_1\times\cdots\times T_n$.)
\item
$I$ is a decidable subset of $S$. ($I$ is the set of states that the program could be in initially.)
\item
$R\subseteq S\times S$ is a decidable set of ordered pairs. ($R(s,t)$ iff $s$ satisfies the condition of the \While loop and
there is some choice of instruction that takes $s$ to $t$.
Note that if $s$ does not satisfy the condition of the \While loop then there is no $t$ such that $R(s,t)$.
This models the \While loop termination condition.)
\end{itemize}
\item
A {\it computation} is a (finite or infinite) sequence of states $s_1,s_2,\ldots$
such that
\begin{itemize}
\item
$s_1\in I$.
\item
For all $i$ such that $s_i$ and $s_{i+1}$ exist, $R(s_i,s_{i+1})$.
\item
If the sequence is finite and ends in $s$ then there is no pair in $R$ whose first coordinate is $s$.
Such an $s$ is called {\it terminal}.
\end{itemize}
\item
A program {\it terminates} if every computation of it is finite.
\item
A {\it computational segment} is a sequence of states $s_1,s_2,\ldots,s_n$ such that, for all $1\le i\le n-1$,
$R(s_i,s_{i+1})$. Note that we do not insist that $s_1\in I$ nor do we insist that $s_n$ is a terminal state.
\end{enumerate}
\end{definition}

Consider Program~\examplexns.

\begin{lstlisting}[frame=single,mathescape,title=Program~\examplex]
$(x,y) = (\inp(\Z),\inp(\Z))$
While $x>0$ 
	control = $\inp(1,2)$
	if control == 1
		$(x,y)=(x+10,y-1)$
	else
	if control == 2
		$(x,y)=(y+17,x-2)$
\end{lstlisting}

Program~\examplex can be defined as follows:
\begin{itemize}
\item
$S=I=\Z\times\Z$.
\item
$R= \{ ((x,y),(x+10,y-1))\st x,y\ge 1\}\bigcup \{((x,y),(y+17,x-2))\st x,y\ge 1\}.$
\end{itemize}

\section{A Proof Using the Order $(\nat,\le)$ }\label{se:simpleord}

We show that every computation of Program~\wfN terminates.
To prove this we will find
a quantity that, during every iteration of the \While Loop, decreases.
None of $x,y,z$ qualify. However, the quantity $x+y+z$ does.
We use this in our proof.

\begin{lstlisting}[frame=single,mathescape,title={Program~\wfN}]
$(x,y,z) = (\inp(\Z), \inp(\Z), \inp(\Z))$
While $x>0$ and $y>0$ and $z>0$
	control = $\inp(1,2,3)$
	if control == 1 then
		$(x,y,z)=(x+1,y-1,z-1)$
	else
	if control == 2 then
		$(x,y,z)=(x-1,y+1,z-1)$
	else
	if control == 3 then
		$(x,y,z)=(x-1,y-1,z+1)$
\end{lstlisting}

\begin{theorem}\label{th:prog1}
Every computation of Program~\wfN is finite.
\end{theorem}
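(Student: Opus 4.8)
The plan is to exhibit a single integer-valued quantity that strictly decreases at every step of any computation and that stays nonnegative, and then to invoke the well-foundedness of $(\nat,\le)$ to rule out an infinite computation. As the preceding discussion already suggests, the right quantity is $\x+\y+\z$. The first step is to check the effect of each branch of the \While loop on this sum. Under control $=1$ the new triple is $(\x+1,\y-1,\z-1)$, so the sum becomes $(\x+\y+\z)-1$; under control $=2$ it is $(\x-1,\y+1,\z-1)$, again lowering the sum by $1$; and under control $=3$ it is $(\x-1,\y-1,\z+1)$, once more lowering the sum by $1$. Thus, no matter which instruction the user selects, a single pass through the loop body decreases $\x+\y+\z$ by exactly $1$.

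Next I would observe that whenever the loop body is entered the guard forces $\x\ge 1$, $\y\ge 1$, and $\z\ge 1$ (the variables are integers, and each is required to be strictly positive), so $\x+\y+\z\ge 3$. Phrased in terms of the formal definition of a program $(S,I,R)$: every non-terminal state $s$ satisfies $\x+\y+\z\ge 3>0$, and whenever $R(s,t)$ holds, the value of $\x+\y+\z$ at $t$ is exactly one less than its value at $s$. So the map sending a state to $\x+\y+\z$ is a measure into $(\nat,\le)$ that is well-defined (nonnegative) on every state from which the computation can proceed, and it drops by one along every application of $R$.

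To finish, I would suppose toward a contradiction that some computation $s_1,s_2,\ldots$ is infinite. Writing $a_i$ for the value of $\x+\y+\z$ in state $s_i$, infiniteness means every $s_i$ is non-terminal, so $a_i\ge 3$ for all $i$, while the branch analysis gives $a_{i+1}=a_i-1$. Hence $a_1,a_2,\ldots$ is an infinite strictly descending sequence of natural numbers, contradicting the well-foundedness of $(\nat,\le)$; therefore every computation is finite.

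There is essentially no hard step here, and no delicate calculation to grind through. The only point that needs care is confirming that the measure never leaves $\nat$, and this is exactly what the loop guard guarantees: it is what lets us appeal to well-foundedness rather than merely to the sum decreasing (a sum of integers could decrease forever). This bookkeeping—find a measure into a well-founded order that strictly decreases along $R$—is precisely the pattern that the general Theorem~\ref{th:useorderings} is meant to abstract.
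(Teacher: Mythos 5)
Your proof is correct and follows the same approach as the paper: both use the measure $\x+\y+\z$, observe that every branch of the loop decreases it by exactly $1$ while the guard keeps it positive, and conclude termination from the well-foundedness of $(\nat,\le)$. Your write-up is in fact slightly more careful than the paper's (which packages the same idea into a function $f$ that is set to $0$ once some variable is $\le 0$), but the underlying argument is identical.
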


\begin{proof}

Let 
\begin{equation}
f(\x,\y,\z) = 
\begin{cases}
0 \text{ if   any of $x,y,z$ are $\le 0$;}
\\
\x + \y +\z \text{ otherwise.}
\cr
\end{cases}
\end{equation}

Assume, by way of contradition, that there is a nonterminating computation.

$$(\x_1,\y_1,\z_1), (\x_2,\y_2,\z_2),\ldots,$$

Before every iteration of the \While loop $f(\x,\y,\z)>0$.
After every iteration of the \While loop  $f(\x,\y,\z)$ has decreased.
Hence

$$f((\x_1,\y_1,\z_1)> f(\x_2,\y_2,\z_2)> \ldots,$$

This is impossible since the range of $f$ is $\nat$.

\end{proof}

The keys to the proof of Theorem~\ref{th:prog1} are (1) $x+y+z$ decreases with
every iteration, and (2) there is no infinite decreasing sequence of naturals.
We will later state a general theorem that can be used on any program
that satisfies generalizations of those properties.

\section{A Proof Using the Ordering $(\nat\times\nat\times\nat\times\nat,\llex)$}\label{se:compordering}

\begin{lstlisting}[frame=single,float,mathescape,title={Program~\wfNNN}]
$(w,x,y,z)= (\inp(\Z),\inp(\Z),\inp(\Z),\inp(\Z))$
While $w>0$ and $x>0$ and $y>0$ and $z>0$
	control = $\inp(1,2,3)$
	if control == 1 then
		$x=\inp(x+1,x+2,\ldots)$
		$w=w-1$
	else
	if control == 2 then
		$y=\inp(y+1,y+2,\ldots,)$
		$x=x-1$
	else
	if control == 3 then
		$z=\inp(z+1,z+2,\ldots)$
		$y=y-1$
\end{lstlisting}

To prove that 
every computation of Program~\wfNNN is finite we need to find
a quantity that, during every iteration of the \While Loop, decreases.
None of $x,y,z$ qualify. 
No arithmetic combination of $w,x,y,z$ qualifies.

\begin{definition}
Let $P$ be an order and $k\ge 1$.
The {\it lexicographic order}  on $P^k$ is the order
$$(a_1,\ldots,a_k) \llex (b_1,\ldots,b_k)$$
if for the least $i$ such that $a_i\ne b_i$, $a_i<b_i$.
\end{definition}

\begin{example}
In the order $(\nat^4,\llex)$
$$(1,10,10000000000,99999999999999) \llex (1,11,0,0).$$
\end{example}

We leave the following lemma to the reader.
\begin{lemma}\label{le:wf}
If $P$ is an well founded order and $k\ge 1$ then 
$(P,\llex)$ is a well founded order.
\end{lemma}

\begin{theorem}\label{th:prog2orderings}
Every computation of Program~\wfNNN is finite.
\end{theorem}

\begin{proof}

Assume, by way of contradiction, that there is a nonterminating computation.

$$(\w_1,\x_1,\y_1,\z_1), (\w_2,\x_2,\y_2,\z_2),\ldots,$$

Let

\begin{equation}
f(\w,\x,\y,\z) = 
\begin{cases}
(0,0,0,0) \text{ if any of $w,x,y,z$ are $\le$ 0;}
\\
(\w,\x,\y,\z) \text{ otherwise.}
\cr
\end{cases}
\end{equation}

We will be concerned with the order 
$(\nat^4,\llex)$.

\noindent
{\bf Claim 1:} In every iteration of the \While loop $f(\w,\x,\y,\z)$ decreases.

\noindent
{\bf Proof of Claim 1:}

Consider an iteration of the \While loop.
There are three cases.

\begin{enumerate}
\item
control=1: 
$w$ decreases by 1, 
$x$ increases by an unknown amount, 
$y$ stays the same,
$z$ stays the same.
Since the order is lexicographic, and $w$ is the first coordinate, 
the tuple decreases no matter how much $x$ increases.
\item
control=2: 
$w$ stays the same, 
$x$ decreases by 1,  
$y$ increases by an unknown amount,
$z$ stays the same.
Since the order is lexicographic, $w$ is the first coordinate and stays the same, and $x$ is the
second coordinate and decreases, the tuple decreases no matter how much $y$
 increases.
\item
control=3: $w$ stays the same, $x$ stays the same, $y$ decreases by 1, $z$ increases by an unknown amount.
This case is similar to the two other cases.
\end{enumerate}

\noindent
{\bf End of Proof of Claim 1}

Before every iteration of the \While loop $f(\w,\x,\y,\z)>0$.
After every iteration of the \While loop  $f(\w,\x,\y,\z)$ has decreased.
Hence

$$f(\w_1,\x_1,\y_1,\z_1)> f(\w_1,\x_2,\y_2,\z_2)> \ldots,$$

This is impossible since the range of $f$ if $P$ and, by Lemma~\ref{le:wf},
$P$ has no infinite descending sequences.

\end{proof}

\section{A General Theorem about Proving Programs Terminate Using Well Founded Orderings}\label{se:wfgen}

The proofs of Theorems~\ref{th:prog1}
and~\ref{th:prog2orderings} look very much alike.
There is a general theorem, due to Floyd~\cite{floydpl},
that captures both of these proofs
and many more.

\begin{definition}
An order $T$ is {\it well-founded} if every nonempty subset has a minimal element.
Note that if $T$ is well-founded then there are no infinite descending sequences of elements of $T$.
\end{definition}

\begin{theorem}\label{th:useorderings}
Let $PROG=(S,I,R)$ be a program.
Assume that 
there is a well-founded order $(P,<_P)$, and a map $f:S\into P$ such that
if $R(s,t)$ then $f(t) <_P f(s).$
Then any computation of $PROG$ is finite.
\end{theorem}

\begin{proof}
Assume the premise holds. We denote $<_P$ by $<$.
Assume, by way of contradiction, that the program does not terminate.
Then there exists an infinite sequence of states

$$s_1,s_2,s_3,\ldots,$$

such that, for all $i$, $R(s_i,s_{i+1})$.  By the premise on $f$ we have

$$f(s_1) > f(s_2) > f(s_3) > \cdots $$

This contradicts $<$ being a well-founded order.

\end{proof}

\begin{note}
It turns out that this theorem is iff. That is, if every computation of $PROG$ is finite
then there is a (perhaps contrived) well-order that satisfies the premise.
\end{note}

\section{A Proof Using Ramsey's Theorem}\label{se:useramsey}

In the proof of Theorem~\ref{th:prog2orderings} we showed that during every single step
of Program~\wfNNN the quantity $(w,x,y,z)$ decreased with respect to the order $\llex$. 
The proof of termination was easy in that we only
had to deal with one step but hard in that we had to deal with the lexicographic order
on $\nat\times\nat\times\nat\times\nat$ rather than just the order $\nat$.

In this section we will prove that Program~\wfNNN terminates in a different way.
We will not need an order on 4-tuples. We will only deal with
$w,x,y,z$ individually. However, we will need to prove that, for
{\it each} finite \compsegns, at least one of $w,x,y,z$ decreases.

We will use the infinite Ramsey's Theorem. In the Appendix we will give some history and the proof of Ramsey's Theorem.
For now we state it and use it.

\begin{notation}~
\begin{enumerate}
\item
If $n\ge 1$ then $K_n$ is the complete graph with vertex set $V=\{1,\ldots,n\}$.
\item
$K_\nat$ is the complete graph with vertex set $\nat$.
\end{enumerate}
\end{notation}

\begin{definition}\label{de:homog}
Let $c,n\ge 1$.
Let $G$ be $K_n$ or $K_\nat$. Let $COL$ be a $c$-coloring of the edges of $G$.
A set of vertices $V$ is {\it homogeneous with respect to $COL$} if
all the edges between vertices in $V$ are the same color.
We will drop the {\it with respect to $COL$} if the coloring is understood.
\end{definition}

\noindent
{\bf Infinite Ramsey's Theorem:}

\begin{theorem}\label{th:ramsey}
Let $c\ge 1$.
For every $c$-coloring of the the edges of $K_\nat$ there exists  an infinite  homogeneous set.
\end{theorem}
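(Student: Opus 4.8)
The plan is to use the classical iterative ``dominating color'' construction, driven by two applications of the pigeonhole principle. The whole argument rests on the tension between finitely many colors and infinitely many vertices: every time we look at one vertex and its infinitely many outgoing edges, some color must be used infinitely often.

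First I would build, by recursion, a nested sequence of infinite vertex sets together with a sequence of distinguished vertices. Set $A_0=\nat$. Given an infinite set $A_i$, let $v_{i+1}=\min A_i$ and partition the infinite set $A_i\setminus\{v_{i+1}\}$ according to the color of its edge to $v_{i+1}$; this yields $c$ classes, so by pigeonhole at least one is infinite. Call that class $A_{i+1}$ and let $\chi(v_{i+1})\in\{1,\ldots,c\}$ be the common color of all edges from $v_{i+1}$ into $A_{i+1}$. Since each $A_{i+1}$ is infinite, the recursion never terminates, producing an infinite increasing sequence $v_1<v_2<v_3<\cdots$, each carrying a color $\chi(v_i)$. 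The invariant I would record is: whenever $i<j$, the edge $\{v_i,v_j\}$ has color $\chi(v_i)$. This holds because $v_j\in A_{j-1}\subseteq A_i$, and every edge from $v_i$ into $A_i$ was assigned color $\chi(v_i)$ at stage $i$.

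Next I would apply pigeonhole a second time, now to the color sequence $\chi(v_1),\chi(v_2),\ldots$, which consists of infinitely many values from the finite set $\{1,\ldots,c\}$. Some single color $c^*$ occurs for infinitely many indices; let $W=\{v_i:\chi(v_i)=c^*\}$, an infinite set. For any $v_i,v_j\in W$ with $i<j$, the recorded invariant gives that $\{v_i,v_j\}$ has color $\chi(v_i)=c^*$. Thus all edges within $W$ share the color $c^*$, so $W$ is homogeneous in the sense of Definition~\ref{de:homog}, and the theorem follows.

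I expect the main obstacle is not any single computation but the bookkeeping of the recursion. Two points need care: that each $A_{i+1}$ remains infinite, so the construction can proceed forever, and, more subtly, that the color chosen \emph{locally} at $v_i$ persists against every $v_j$ selected at \emph{later} stages. The second point is exactly what the nesting $A_{i+1}\subseteq A_i$ buys us, and stating the invariant cleanly is the crux; once it is in place, both pigeonhole steps are routine. (An alternative route is to first prove the case $c=2$ and induct on $c$ by merging colors, but the direct construction above handles all $c$ simultaneously and avoids the extra inductive layer.)
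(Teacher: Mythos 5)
Your proof is correct: the nested-sets construction with two applications of the pigeonhole principle is the standard argument for the infinite Ramsey Theorem, and your invariant (that for $i<j$ the edge $\{v_i,v_j\}$ gets color $\chi(v_i)$ because $v_j\in A_{j-1}\subseteq A_i$) is stated and justified cleanly. Note that the paper itself gives no proof of this theorem --- it is quoted as a classical result with citations to Ramsey's original paper and standard references --- so there is nothing to compare against; your write-up matches the textbook proof found in those sources.
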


\begin{theorem}\label{th:prog2ramsey}
Every computation of Program~\wfNNN is finite.
\end{theorem}

\begin{proof}

We show Program~\wfNNN terminates. Assume, by way of contradiction,
that there is an infinite computation.
Let this computation be

\smallskip

$$(\w_1,\x_1,\y_1,\z_1), (\w_2,\x_2,\y_2,\z_2), \ldots.$$

\smallskip

We show that for each finite \compseg one of $w,x,y$ will decrease.
Let $i<j$. We look at the finite \compseg

$$(\w_i,\x_i,\y_i,\z_i), (\w_{i+1},\x_{i+1},\y_{i+1},\z_{i+1}), \ldots, (w_j,x_j,y_j,z_j).$$

There are several cases.

\begin{enumerate}
\item
If control=1 ever occurs in the segment then $w_i>w_j$.
No other case makes $w$ increase, so we are done.
In all later cases we can assume that control is never 1 in the segment.
\item
If control=2 ever occurs in the segment then $x_i>x_j$. Since control=1 never occurs and
control=3 does not make $x$ increase, $x$ decreases, and we are done.
In all later cases we can assume that control is never 1 or 2 in the segment.
\item
If control=3 is the only case that occurs in the segment then $y_i>y_j$.
\end{enumerate}

Since in for each finite \compseg one of $w,x,y$ decreases we have that,
for all $i<j$, either $\w_i>\w_j$ or $\x_i>\x_j$ or $\y_i>\y_j$.
We use this to create a coloring of the edges of $K_\nat$.
Our colors are $W,X,Y$. In the coloring below each case assumes that
the cases above it did not occur.

\begin{equation}
COL(i,j) = 
\begin{cases}
W \text{ if $\w_i>\w_j$;}
\\
X \text{ if $\x_i>\x_j$;}
\\
Y \text{ if $\y_i>\y_j$.}
\cr
\end{cases}
\end{equation}

By Ramsey's Theorem there is an infinite set 

$$i_1 < i_2 < i_3 < \cdots $$

such that

$$COL(i_1,i_2) = COL(i_2,i_3) = \cdots.$$

(We actually know more. We know that {\it all} pairs  $(i_j,i_k)$
have the same color. We do not need this fact here; however,
see the second note after Theorem~\ref{th:useramseywf}.)

Assume the color is $W$ (the cases for $X,Y$ are similar).
Then

$$\w_{i_1} > \w_{i_2} > \w_{i_3} > \cdots. $$

Hence eventually $\w$ must be less than 0. When this happens the program terminates.
This contradicts the program not terminating.
\end{proof}

\section{A General Theorem about Proving Programs Terminate Using Ramsey Theorem}\label{se:ramseygen}

The keys to the proof of Theorem~\ref{th:prog2ramsey} are (1) in every finite \compseg one of $w,x,y$ decreases,
and (2) by Ramsey's Theorem any nonterminating computation leads to an infinite decreasing
sequence in a well-founded set.
These ideas are from Theorem 1 of~\cite{ramseypl}, though similar ideas were in~\cite{LJA}.

Theorem 1 of~\cite{ramseypl} is a very general statement about program termination.
We present three theorems in increasing order of generality. The last one  is Theorem 1 of~\cite{ramseypl}.

\begin{theorem}\label{th:useramsey}
Let $PROG=(S,I,R)$ be a program of the form of Program~\genns.
Note that the variables are $x[1],\ldots,x[n]$.
Assume that for each  \compseg $t_1,\ldots,t_L$ there exists a $1\le k\le m$ such that
$x[k]$ in $t_1$ is strictly less than $x[k]$ in $t_L$.
Then any computation of $PROG$ is finite.
\end{theorem}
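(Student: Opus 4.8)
The plan is to generalize the proof of Theorem~\ref{th:prog2ramsey} from the three variables $w,x,y$ to the $n$ variables $x[1],\ldots,x[n]$, replacing the three colors there by the $n$ colors $\{1,\ldots,n\}$. First I would argue by contradiction: suppose $PROG$ has an infinite computation $s_1,s_2,s_3,\ldots$. Since $R(s_i,s_{i+1})$ holds for every $i$, each state $s_i$ satisfies the \While-loop guard $x[1]>0 \and \cdots \and x[n]>0$, so in every state of the computation all coordinates are strictly positive; in particular the value of each $x[k]$ at each $s_i$ is a positive integer. Next I observe that for any $i<j$ the subsequence $s_i,s_{i+1},\ldots,s_j$ is a \compseg, so the hypothesis applies to it: there is a coordinate $k$ that strictly decreases from $s_i$ to $s_j$ (as in Theorem~\ref{th:prog2ramsey}, the condition that drives termination is that \emph{some} coordinate is strictly smaller in the last state of the segment than in the first).

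The key step is to convert this pointwise statement---in which the decreasing coordinate is allowed to depend on the pair $(i,j)$---into a \emph{single} coordinate that decreases along an infinite subsequence, and this is exactly what the Infinite Ramsey's Theorem delivers. I would color the edges of $K_\nat$ with the $n$ colors $\{1,\ldots,n\}$, assigning to the edge $\{i,j\}$ with $i<j$ the least index $k$ such that $x[k]$ is strictly smaller in $s_j$ than in $s_i$. The hypothesis guarantees that such a $k$ always exists, so this coloring is total and uses only finitely many colors. Applying Theorem~\ref{th:ramsey} with $c=n$ yields an infinite homogeneous set $i_1<i_2<i_3<\cdots$, all of whose edges receive one fixed color $k$.

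Finally I would read off the contradiction. Homogeneity gives in particular that $x[k]$ strictly decreases across $s_{i_1},s_{i_2}$, then across $s_{i_2},s_{i_3}$, and so on, so the values of $x[k]$ at $s_{i_1},s_{i_2},s_{i_3},\ldots$ form an infinite strictly decreasing sequence of positive integers. No such sequence exists, since $(\nat,\le)$ is well founded; this contradicts the assumption that the computation is infinite. Hence every computation of $PROG$ is finite.

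The main obstacle is conceptual rather than computational. The hypothesis asserts only that some coordinate decreases on each segment, and a priori the identity of that coordinate can change from segment to segment, so no single coordinate need decrease monotonically along the computation. The two things that bridge this gap are (1) that the decrease hypothesis is available for segments of \emph{arbitrary} length, not just single \While-loop iterations---this is what makes the edge coloring well defined on every pair $\{i,j\}$---and (2) the finiteness of the color set, which lets the Infinite Ramsey's Theorem extract one coordinate that decreases along an infinite homogeneous subsequence. Everything else is routine bookkeeping.
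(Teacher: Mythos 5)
Your proposal is correct and follows essentially the same route the paper takes: the paper proves this theorem implicitly via the proof of Theorem~\ref{th:prog2ramsey} and the sketch of Theorem~\ref{th:useramseygen}, both of which use exactly your coloring $COL(i,j)=$ the least index $k$ with $x[k]$ strictly smaller at $s_j$ than at $s_i$, followed by the Infinite Ramsey's Theorem to extract one coordinate decreasing along an infinite subsequence, contradicting well-foundedness of $(\nat,\le)$. Your added observation that every state of an infinite computation satisfies the loop guard (so the values are positive integers) is a nice explicit touch that the paper leaves implicit.
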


\begin{proof}

We show Program $(S,I,R)$ terminates. Assume, by way of contradiction,
that there is an infinite computation.
Let this computation be

\smallskip

$$s_1,s_2,s_3,\ldots $$
where each $s_i$ is an $n$-tuple of values for $(x[1],\ldots,x[n])$.

By the premise, for every $i<j$, in the finite \compseg
$$s_i,s_{i+1},\ldots,s_j$$
there is a $k$ such that $x[k]$ in $s_i$ is less than $x[k]$ in $s_j$.

We use this to create a coloring of the edges of $K_\nat$.
Our colors are $\{1,\ldots,m\}$. 
$COL(i,j)$ is the least index $k$ such that $x[k]$ in $s_i$ is greater than
$x[k]$ in $s_j$.

By Ramsey's Theorem there is an infinite set 
$$i_1 < i_2 < i_3 < \cdots $$
and a color $L$ such that

$$L=COL(i_1,i_2)=COL(i_2,i_3)=\cdots.$$

Hence the value of $x[L]$ in $s_1$ is larger than it is in $s_2$ is larger than it is in $s_3$, etc.
This means that there is a time when the value of $x[L]$ is $\le 0$. Hence the program terminates.
This is a contradiction.
\end{proof}

To prove that a program terminates we might use some
function of the variables rather than the variables themselves.
The next theorem, which is a generalization of Theorem~\ref{th:useramsey},
captures this.

\begin{theorem}\label{th:useramseyf}
Let $PROG=(S,I,R)$ be a program of the form of Program~\genns.
Note that the variables are $x[1],\ldots,x[n]$. We denote the vector of variables by $\vec x$.
Assume there exists functions $f_1(\vec x),\ldots,f_M(\vec x)$ with range $\nat$ such that the
following holds:
For each \compseg $t_1,\ldots,t_L$ there exists a $1\le k\le M$ such that
$f_k(\vec x)$ in $t_1$ is strictly less than $f_k(\vec x)$ in $t_L$.
Then any computation of $PROG$ is finite.
\end{theorem}

\begin{sketch}

This proof is virtually identical to the proof of Theorem~\ref{th:useramsey}
The only difference comes towards the end, so we do the last few lines.

$COL(i,j)$ is the least index $k$ such that $f(\vec x)$ in $s_i$ is greater than
$f(\vec x)$ in $s_j$.

By Ramsey's Theorem there is an infinite set 

$$i_1 < i_2 < i_3 < \cdots $$

and a color $L$ such that

$$L=COL(i_1,i_2)=COL(i_2,i_3)=\cdots.$$

Hence the value of $f_L(\vec x)$ in $s_1$ is larger than it is in $s_2$ is larger than it is in $s_3$, etc.
This means that there is a time when the value of $f_L(\vec x)$ is $\le -1$. This is a contradiction since
$f$ has range $\nat$.
\end{sketch}

In the statement of Theorem~\ref{th:useramseyf} the functions $f$ mapped to the natural numbers.
What was it about the natural numbers that we used? At first glance it seems like we only use
that $-1\notin\nat$. However, we really used that $\nat$ is well-founded. This leads to a more
general theorem.

\begin{theorem}\label{th:useramseywf}
Let $PROG=(S,I,R)$ be a program of the form of Program~\genns.
Note that the variables are $x[1],\ldots,x[n]$. We denote the vector of variables by $\vec x$.
Assume there exists functions $f_1(\vec x),\ldots,f_M(\vec x)$ 
such that $f_i$ has range $P_i$ where $P_i$ is a well-founded set.
for each \compseg $t_1,\ldots,t_n$ there exists a $1\le k\le M$ such that
$f_k(\vec x)$ in $t_1$ is strictly less than (using the order $P_k$) $f_k(\vec x)$ in $t_n$.
Then any computation of $PROG$ is finite.
\end{theorem}

\begin{sketch}

This proof is virtually identical to the proof of Theorem~\ref{th:useramsey}
The only difference comes towards the end, so we do the last few lines.

$COL(i,j)$ is the least index $k$ such that $f(\vec x)$ in $s_i$ is greater than (using the order $P_k$)
$f(\vec x)$ in $s_j$.

By Ramsey's Theorem there is an infinite set 

$$i_1 < i_2 < i_3 < \cdots $$

and a color $L$ such that

$$L=COL(i_1,i_2)=COL(i_2,i_3)=\cdots.$$

Hence the value of $f_L(\vec x)$ in $s_1$ is larger (using the order $P_k$) 
than it is in $s_2$ is larger than (using the order $P_k$) it is in $s_3$, etc.
Hence we have an infinite decreasing sequence in $P_k$. 
This is a contradiction since $P_k$ is a well-founded ordering.
\end{sketch}

\begin{note}
It turns out that this theorem is iff. That is, if every omputation of $PROG$ is finite
then there are (perhaps contrived) functions $f_i$ and well-founded orderings $P_i$ as
stated in Theorem~\ref{th:useramseywf}.
This is the actual statement of Theorem 1 of~\cite{ramseypl}.
\end{note}

\begin{note}
The proofs of Theorems~\ref{th:prog2ramsey}, \ref{th:useramsey} and \ref{th:useramseywf}
do not need the full strength of Ramsey's Theorem. 
Consider Theorem~\ref{th:useramsey}.
For any $i,j,k$ if
$COL(i,j)=a$ (so $a$ is the least number such that $x[a]$ in $s_i$ is greater than $x[a]$ in $s_j$)
$COL(j,k)=a$ (so $a$ is the least number such that $x[a]$ in $s_j$ is greater than $x[a]$ in $s_k$)
one can show $COL(i,k)=a$.
Such colorings are called {\it transitive}.
Hence we only need Ramsey's Theorem for transitive colorings.
We discuss this further in Section~\ref{se:need}.
\end{note}

\section{A Proof Using Matrices and Ramsey's Theorem}\label{se:matrix}

Part of the proof of Theorem~\ref{th:prog2ramsey}
involved showing that, for any finite \compseg of Program~\wfNNNns, one of $w,x,y,z$ decreases.
Can such proofs be automated?

Ben-Amram~\cite{BA:delta} developed a way to partially automate
such proofs. He uses matrices and Ramsey's Theorem.
An earlier version by Lee, Jones, and Ben-Aram~\cite{LJA} used size-change graphs 
instead of matrices. We discuss the difference later.

We use Ben-Amram's matrix techniques to give a proof that Program~\wfNNN terminates.
We will then discuss their general technique.

Program~\wfNNN has variables $w,x,y,z$. To use Theorem~\ref{th:useramsey} on it we need
to know that in every finite \compseg one of these variables decreases.
We would rather reason about what happens during one step. Let us capture what we do
know about one step.

If control=1 then 
\[
\begin{array}{rl}
w= & w-1\cr
x= & \inp(x+1,x+2,\ldots)\cr
y= & y\cr
z= & z\cr
\end{array}
\]

We represent this by a matrix. The rows and columns are both indexed by the variables,
so it will be a four by four matrix. In the (say) $(w,y)$ entry we put the difference
between the new $y$ and the old $w$. If we do not know the difference we put $\infty$
(this will happen most of the time).
It is easy to see that the matrix is:

\[ C_1 = \left ( \begin{array}{cccc}
 -1         & \infinity & \infinity  & \infinity \cr
  \infinity & \infinity & \infinity  & \infinity \cr
  \infinity & \infinity & 0          & \infinity \cr
  \infinity & \infinity & \infinity & 0 \cr
\end{array} \right )
\]

The matrix for control=2 is

\[ C_2= \left ( \begin{array}{cccc}
  0         & \infinity & \infinity  & \infinity \cr
  \infinity & -1        & \infinity  & \infinity \cr
  \infinity & \infinity & \infinity  & \infinity \cr
  \infinity & \infinity & \infinity & 0 \cr
\end{array} \right )
\]

The matrix for control=3 is

\[C_3= \left ( \begin{array}{cccc}
  0         & \infinity & \infinity  & \infinity \cr
  \infinity & 0         & \infinity  & \infinity \cr
  \infinity & \infinity & -1         & \infinity \cr
  \infinity & \infinity & \infinity & \infinity \cr
\end{array} \right )
\]

Clearly if the program executes any one of these commands then
some variable decreases. In terms of the matrices this means that
some entry on the diagonal is negative.

We need that any finite sequence of instructions leads to some variable decreacing 
We want to express any finite sequence of instructions as a matrix. How? 

\begin{definition}
If $A$ and $B$ are $n\times n$ matrices then we define (just for this paper)
the product $AB$ in the following (nonstandard) way:

$$AB[i,j] = \min_{1\le k\le n} \{ a_{ik} + b_{kj} \}.$$

By convention, for any $x\in \nat \cup \{\infinity\}$,
$\infinity + x = x + \infinity = \infinity$.
\end{definition}

We leave the proof of the following easy lemma to the reader.

\begin{lemma}\label{le:matrix}
Let $\vec x$ be variables and $g_1(\vec x)$, $g_2(\vec x)$ be computable functions.
Let $PROG_1$ be the short program $\vec x = g_1(\vec x)$.
Let $PROG_2$ be the short program $\vec x = g_2(\vec x)$.
(We think of $PROG_1$ and $PROG_2$ as being what happens in the various control cases.)
Let $C_1$ be the matrix that represents what is known whenever $PROG_1$ is executed.
Let $C_2$ be the matrix that represents what is known whenever $PROG_2$ is executed.
Then the matrix product $C_1C_2$ as defined above represents what is known when
$PROG_1$ and then $PROG_2$ are executed.
\end{lemma}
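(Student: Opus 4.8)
The plan is to prove the lemma by formalizing the chaining argument that motivated the $(\min,+)$ product, working throughout with the convention established in the discussion preceding the statement: a finite entry $C[i,j]=L$ means that \emph{every} execution of the corresponding piece of code satisfies $x[i]\le x[j]'+L$, where $x[i]$ is the value before the code runs and $x[j]'$ is the value after, while $\infinity$ records that no finite bound of this form is guaranteed. To say that a matrix \emph{represents what is known} about a piece of code is to say that each finite entry is a bound that always holds and is the tightest bound derivable from the stipulated information. I would first fix notation for one execution of ``$PROG_1$ and then $PROG_2$'': write $x[i]$ for the value of variable $i$ before $PROG_1$, write $x[i]'$ for its value in the intermediate state (after $PROG_1$, before $PROG_2$), and write $x[i]''$ for its value after $PROG_2$.

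The first step is soundness. Fix $i,j$ with $(C_1C_2)[i,j]=L<\infinity$. Since $(C_1C_2)[i,j]=\min_k\{C_1[i,k]+C_2[k,j]\}$, the minimum is attained at some $k^\ast$ with $C_1[i,k^\ast]$ and $C_2[k^\ast,j]$ both finite and summing to $L$. Because $C_1$ represents $PROG_1$ we have $x[i]\le x[k^\ast]'+C_1[i,k^\ast]$, and because $C_2$ represents $PROG_2$ (whose ``before'' is the intermediate state) we have $x[k^\ast]'\le x[j]''+C_2[k^\ast,j]$; adding these yields $x[i]\le x[j]''+L$, which is exactly the bound asserted by the $(i,j)$ entry of $C_1C_2$. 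The $\infinity$ bookkeeping falls out of the same computation: if for \emph{every} $k$ at least one of $C_1[i,k]$, $C_2[k,j]$ is $\infinity$, no chain through an intermediate variable gives a finite bound, and the conventions $\infinity+L=\infinity+\infinity=\infinity$ force $(C_1C_2)[i,j]=\infinity$, correctly recording that nothing is known about $x[i]$ versus $x[j]''$.

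The second step is to argue that $C_1C_2$ is not merely sound but records the \emph{tightest} bound obtainable from the given data, which is what justifies taking $\min$ rather than any other combining rule. Any bound relating $x[i]$ to $x[j]''$ that is derivable purely from the pairwise facts stored in $C_1$ and $C_2$ must pass through the intermediate state at a single variable $k$, since that state is the only one shared by the two pieces of code; such a derivation therefore has the form $x[i]\le x[k]'+C_1[i,k]\le x[j]''+C_1[i,k]+C_2[k,j]$, and the strongest (smallest) such bound is the minimum over $k$, namely $(C_1C_2)[i,j]$. I would stress that within a single matrix there is no further chaining available, because each entry relates a ``before'' variable to an ``after'' variable and two such facts cannot be composed without an intervening state, so a single intermediate variable is the only degree of freedom.

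The step I expect to be the main obstacle is pinning down what ``represents what is known'' must mean and keeping the before/after bookkeeping exactly straight. The delicate point is that the product captures the tightest bound \emph{derivable from the pairwise difference information} in $C_1$ and $C_2$, which need not equal the true tightest bound of the actual composite program: the matrices deliberately discard correlations among variables, so the composite code may satisfy a sharper bound that no chain through one intermediate variable can certify. The lemma should thus be read, and I would prove it, as the statement that the $(\min,+)$ product correctly \emph{propagates} the stipulated bounds; the care required is to make the soundness direction airtight while being explicit that we are using the convention under which ``$PROG_1$ then $PROG_2$'' corresponds to $C_1C_2$ and not to its transpose.
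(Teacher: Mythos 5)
Your proposal is correct and follows essentially the same route as the paper: the paper explicitly leaves the formal proof of this lemma to the reader, but the chaining derivation it gives immediately before defining the $(\min,+)$ product is precisely your soundness step (pick the $k^\ast$ attaining the minimum, add the two inequalities, and let the $\infinity$ conventions handle the unbounded cases). Your two added observations---that the product records only the tightest bound \emph{derivable from the pairwise data}, not necessarily the true tightest bound of the composite code, and that one must fix the before/after convention to avoid proving the statement for the transpose---are both correct and worth making explicit, since the paper's two stated conventions for the matrix entries ($x[i]'\le x[j]+L$ in the definition versus $x[i]\le x[k]'+L_1$ in the multiplication discussion) are not consistent with each other.
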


Hence every finite sequence of instructions corresponds to some finite product
of $C_1$'s, $C_2$'s and $C_3$'s.  In the case at hand we need only show that
every such product has a negative number on some diagonal. We state this in general.

\begin{theorem}\label{th:matrix}
Let $PROG=(S,I,R)$ be a program in the form of Program~\genns.
Let $C_1, C_2,\ldots, C_m$
be the matrices associated to control=1, $\ldots$, control=m cases.
If every product of the $C_i$'s yields a matrix with a negative integer on the
diagonal then the program terminates.
\end{theorem}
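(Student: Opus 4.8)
The plan is to reduce the statement to Theorem~\ref{th:useramsey}, exactly as the proof of Theorem~\ref{th:prog2matrix} did for Program~\prwftwons, but now in full generality. The bridge between the two is the observation that every \compseg of $PROG$ is the execution of some finite sequence of control cases, and that the net effect of that sequence is captured by the corresponding product of the matrices $C_1,\ldots,C_m$. Once this correspondence is made precise, the hypothesis ``every product of the $C_i$'s has a negative integer on its diagonal'' translates directly into the premise of Theorem~\ref{th:useramsey}, namely that in every \compseg some variable strictly decreases.

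First I would fix an arbitrary \compseg $s_1,\ldots,s_p$ containing at least one transition. Each transition $s_\ell\to s_{\ell+1}$ is produced by some control value $c_\ell\in\{1,\ldots,m\}$ and is therefore described by the matrix $C_{c_\ell}$. I would then show, by induction on $p$ using Lemma~\ref{le:matrix}, that the single matrix $D=C_{c_1}C_{c_2}\cdots C_{c_{p-1}}$ faithfully records what is known about how the variables in the final state $s_p$ relate to those in the initial state $s_1$: for each $i,j$, if $D[i,j]=L\in\Z$ then along this segment the value of $x[i]$ in $s_p$ is at most the value of $x[j]$ in $s_1$ plus $L$, while $D[i,j]=\infinity$ records that no such bound is asserted. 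The base case $p=2$ is just the definition of $C_{c_1}$, and the inductive step is one application of Lemma~\ref{le:matrix}.

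With the correspondence in hand the rest is immediate. By hypothesis $D$, being a product of the $C_i$'s, has some diagonal entry $D[i,i]$ equal to a negative integer $-d$ with $d\ge 1$. By the reading of $D$ just described, the value of $x[i]$ in $s_p$ is at most the value of $x[i]$ in $s_1$ minus $d$, so $x[i]$ strictly decreases over the segment. Since $s_1,\ldots,s_p$ was arbitrary, every \compseg has some strictly decreasing variable, which is exactly the premise of Theorem~\ref{th:useramsey}; that theorem then yields that every computation of $PROG$ is finite.

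The main obstacle is the soundness link inside the induction: justifying that the $(\min,+)$ product $D$ really does over-approximate the composed transition, and that a genuine \emph{finite} negative entry on the diagonal (as opposed to $\infinity$) is what certifies a strict decrease. This is where Lemma~\ref{le:matrix} does the real work; everything else---the induction on segment length, the observation that products of the $C_i$'s (taken over all lengths and orders) are precisely the matrices attached to \compseg transition sequences, and the final appeal to Ramsey's Theorem through Theorem~\ref{th:useramsey}---is routine. One small point worth stating explicitly is that a \compseg must contain at least one transition, so that it corresponds to a nonempty product covered by the hypothesis.
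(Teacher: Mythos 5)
Your proposal is correct and follows essentially the same route as the paper's own (much terser) proof: map an arbitrary \compseg to the product of its control matrices, invoke the negative-diagonal hypothesis to get a strictly decreasing variable, and conclude via Theorem~\ref{th:useramsey}. The only difference is that you spell out the induction through Lemma~\ref{le:matrix}, which the paper leaves implicit.
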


\begin{proof}
Consider \compseg $s_1,\ldots,s_n$.
Let the corresponding matrices be $C_{i_1},\ldots,C_{i_n}$.
By the premise the product of these matrices has a negative integer on the diagonal.
Hence some variable decreases.
By Theorem~\ref{th:useramsey} the program terminates.
\end{proof}

\begin{note}
Lee, Jones, and Ben-Amram used size-change graphs rather than matrices.
Their results can be interpreted as matrices where, instead of having the
difference, you have whether or not the (say) old $y$ is bigger than
the old $x$, or smaller, or unknown.
\cite{LJA}

\end{note}

In the case at hand it may seem difficult to show that {\it every}
product $C_1$'s, $C_2$'s and $C_3$'s has a negative number on the diagonal.
Howver, we can show this:

\begin{theorem}\label{th:prog2matrix}
Every computation of Program~\wfNNN is finite.
\end{theorem}

\begin{proof}

Let $C_1,C_2,C_3$ be the matrices that represent the cases
of Control=1,2,3 in Program~\wfNNNns. (These matrices are above.)
We show that the premise of Theorem~\ref{th:matrix} holds.
To do this we prove items 0-7 below.
Item 0 is easily proven directly.
Items 1,2,3,4,5,6,7 are easily proven by induction
on the number of matrices being multiplied.

\begin{enumerate}

\item[0.]
$C_1C_2=C_2C_1$, $C_1C_3=C_3C_1$, $C_2C_3=C_3C_2$.

\item[1.]
For all $a\ge 1$
\[ C_1^a = \left ( \begin{array}{cccc}
  -a  & \infinity & \infinity  & \infinity \cr
  \infinity & \infinity & \infinity  & \infinity \cr
  \infinity & \infinity & 0          & \infinity \cr
  \infinity & \infinity & \infinity  & 0         \cr
\end{array} \right )
\]

\item[2.]
For all $b\ge 1$ 
\[C_2^b =  \left ( \begin{array}{cccc}
  0   & \infinity & \infinity  & \infinity \cr
  \infinity & -b        & \infinity  & \infinity \cr
  \infinity & \infinity & \infinity  & \infinity \cr
  \infinity & \infinity & \infinity  & 0         \cr
\end{array} \right )
\]

\item[3.]
For all $c\ge 1$
\[C_3^c =  \left ( \begin{array}{cccc}
  0   & \infinity & \infinity  & \infinity \cr
  \infinity & 0         & \infinity  & \infinity \cr
  \infinity & \infinity & -c         & \infinity \cr
  \infinity & \infinity & \infinity  & \infinity \cr
\end{array} \right )
\]

\item[4.]
For all $a,b\ge 1$
\[C_1^aC_2^b =  \left ( \begin{array}{cccc}
  -a  & \infinity & \infinity  & \infinity \cr
  \infinity & -b        & \infinity  & \infinity \cr
  \infinity & \infinity & 0          & \infinity \cr
  \infinity & \infinity & \infinity  & 0         \cr
\end{array} \right )
\]

\item[5.]
For all $a,c\ge 1$
\[ C_1^aC_3^c = \left ( \begin{array}{cccc}
  -a  & \infinity & \infinity  & \infinity \cr
  \infinity & 0         & \infinity  & \infinity \cr
  \infinity & \infinity & -c         & \infinity \cr
  \infinity & \infinity & \infinity  & \infinity \cr
\end{array} \right )
\]

\item[6.]
For all $b,c\ge 1$
\[ C_2^bC_3^c = \left ( \begin{array}{cccc}
  0   & \infinity & \infinity  & \infinity \cr
  \infinity & -b        & \infinity  & \infinity \cr
  \infinity & \infinity & \infinity  & \infinity \cr
  \infinity & \infinity & \infinity  & \infinity \cr
\end{array} \right )
\]

\item[7.]
For $a,b,c\ge 1$
\[ C_1^aC_2^bC_3^c = \left ( \begin{array}{cccc}
  -a  & \infinity & \infinity  & \infinity \cr
  \infinity & \infinity & \infinity  & \infinity \cr
  \infinity & \infinity & \infinity  & \infinity \cr
  \infinity & \infinity & \infinity  & 0         \cr
\end{array} \right )
\]
\end{enumerate}

Since the multiplication
of these matrices is commutative we need only concern ourselves with $C_1^aC_2^bC_3^c$
for $a,b,c \in \nat$.
In all of the cases below $a,b,c\ge 1$.

\begin{enumerate}
\item
$C_1^a$: $w$ decreases.
\item
$C_2^b$: $x$ decreases.
\item
$C_3^c$: $y$ decreases.
\item
$C_1^aC_2^b$:  Both $w$ and $x$ decrease.
\item
$C_1^aC_3^c$: Both $w$ and $y$ decrease.
\item
$C_2^bC_3^c$: $x$ decreases.
\item
$C_1^aC_2^bC_3^c$: $w$ decreases.
\end{enumerate}

\end{proof}

The keys to the proof of Theorem~\ref{th:prog2matrix} are
(1) represent how the old and new variables relate after one iteration with a matrix,
(2) use these matrices and a type of matrix multiplication to determine that
for every finite \compseg some variable decreases,
(3) use Theorem~\ref{th:useramsey} to conclude the program terminates.

Theorem~\ref{th:matrix} leads to the following algorithm
to test if a  programs terminates.
There is one step (alas, the important one) which we do not say how to do.
If done in the obvious way it may not halt.

\begin{enumerate}
\item
Input Program P.
\item
Form matrices for all the cases of control.
Let them be $C_1,\ldots,C_m$.
\item
Find a finite set of types of matrices ${\cal M}$ such that
that any product of the $C_i$'s (allowing repeats) is in ${\cal M}$.
(If this step is implemented by looking at all possible products
until a pattern emerges then this step might not terminate.)
\item
If all of the elements of ${\cal M}$ have some negative diagonal element
then output {\it YES the program terminates!}
\item
If not the then output {\it I DO NOT KNOW if the program terminates!}
\end{enumerate}

If all products of matrices fit a certain pattern, as they did in the proof of Theorem~\ref{th:prog2matrix},
then this idea for an algorithm will terminate. Even in that case, it may output {\it I DON"T KNOW if the program terminates!}.
However, this algorithm can be used to prove that some programs terminate, just not all.
It cannot be used to prove that a program will not terminate.

The premise of Theorem~\ref{th:matrix} is designed so that we can apply Theorem~\ref{th:useramsey}.
Hence we are only looking at the variables of the program and the natural numbers.
We generalize Theorem~\ref{th:matrix} so it feeds into Theorem~\ref{th:useramseyf},
We omit the proof which is similar to that of Theorem~\ref{th:matrix}.

Let $PROG=(S,I,R)$ be a program in the form of Program~\genns.
Note that the variables are $x[1],\ldots,x[n]$. We denote the vector of variables by $\vec x$.
Let functions $f_1(\vec x),\ldots,f_M(\vec x)$ have range $\nat$.
We can now form $k\times k$ matrices $C_1,\ldots,C_m$ such that
matrix $C_L[i,j]$ is the difference between the new $f_j(\vec x)$ and the old $f_i(\vec x)$.

\begin{theorem}\label{th:matrixf}
Let $PROG=(S,I,R)$ be a program in the form of Program~\genns.
Note that the variables are $x[1],\ldots,x[n]$. We denote the vector of variables by $\vec x$.
Let functions $f_1(\vec x),\ldots,f_M(\vec x)$ have range $\nat$.
Assume that $C_1,\ldots,C_m$ are the matrices associated to them as noted above.
If every product of the matrices has a negative number on the diagonal then
the program terminates.
\end{theorem}

Is there a further generalization of Theorem~\ref{th:matrix} that feeds into
Theorem~\ref{th:useramseywf}. Recall in the premise of Theorem~\ref{th:useramseywf} 
the functions $f$ has range some well-founded order. The matrices we work with
deal with differences. Since the different $f$'s in Theorem~\ref{th:useramseywf} have 
ranges in different
well-founded orders, we cannot take their difference. What if we require that the
$f$'s all have the same well-founded order as their range? This still does not work
since some well-founded order (e.g., $(\nat\times\nat,\llex)$) do not have a notion of
difference. The approach of Lee, Jones, and Ben-Amram that used size-change graphs
insead of matrices (see note after Theorem~\ref{th:matrix}) might work here.

\section{Another Proof Using Matrices and Ramsey's Theorem}\label{se:morematrix}

\begin{lstlisting}[frame=single,float,mathescape,title={Program~\ramsey}]
$(x,y) = (\inp(\Z),\inp(\Z))$
While $x>0$ and $y>0$
	control = $\inp(1,2)$
	if control == 1 then
		$(x,y)=(x-1,x)$
	else
	if control == 2 then
		$(x,y)=(y-2,x+1)$
\end{lstlisting}

We prove Program~\ramsey terminates using matrices. 
The case control=1  is represented by the matrix
$$ C_1 = \mat{ -1 & 0 \\ \infty & \infty }. $$
The case control=2  is represented by the matrix
$$ C_2 = \mat{ \infty & -2 \\  1 & \infty }. $$
This will not work! Note that $C_2$ is has no negative numbers
on its diagonal. Hence we cannot use these matrices in our proof!
What will we do!?
Instead of using $x,y$ we will use $x,y$, and $x+y$.
We comment on whether or not you can somehow use $C_1$ and $C_2$ after the proof.

\begin{theorem}\label{th:prog3matrix}
Every computation of Program~\ramsey is finite.
\end{theorem}

\begin{proof}
We will use Theorem~\ref{th:matrixf} with
functions $x,y$, and $x+y$.
Note that $x+y$ is not one of the original variables which
is why we need Theorem~\ref{th:matrixf} rather than Theorem~\ref{th:matrix}.

The control=1 case of Program~\ramsey corresponds to

\[ D_1 = \left ( \begin{array}{ccc}
  -1        & 0         & 1  \cr
  \infinity & \infinity & \infinity  \cr
  \infinity & \infinity & \infinity \cr
\end{array} \right )
\]

The control=2 case of Program~\ramsey corresponds to 

\[ D_2 = \left ( \begin{array}{ccc}
  \infinity & 1         & \infinity  \cr
  -2        & \infinity & \infinity  \cr
  \infinity & \infinity & -1        \cr
\end{array} \right )
\]

We show that the premises of Theorem~\ref{th:matrixf} hold.
The following are true and easily proven by induction
on the number of matrices being multiplied.

\begin{enumerate}

\item
For all $a\ge 1$ 

\[ D_1^a = \left ( \begin{array}{ccc}
  -a        & -a+1      & -a+2 \cr
  \infinity & \infinity & \infinity  \cr
  \infinity & \infinity & \infinity \cr
\end{array} \right )
\]

\item
For all $b\ge 1$, $b$ odd, $b=2d-1$,

\[ D_2^b = \left ( \begin{array}{ccc}
  -d        & \infinity & \infinity \cr
  \infinity & -d        & \infinity  \cr
  \infinity & \infinity & -2d       \cr
\end{array} \right )
\]

\item
For all $b\ge 2$, $b$ even, $b=2e$, 

\[ D_2^b = \left ( \begin{array}{ccc}
  \infinity & -e+1      & \infinity \cr
  -e-2      & \infinity & \infinity  \cr
  \infinity & \infinity & -2e-1     \cr
\end{array} \right )
\]

\item
For all $a,b\ge 1$, $b$ odd, $b=2d-1$.

\[ D_1^aD_2^b = \left ( \begin{array}{ccc}
  -a-d & -a-d+1  & -a-2d+2 \cr
  \infinity & \infinity & \infinity  \cr
  \infinity & \infinity & \infinity  \cr
\end{array} \right )
\]

\item
For all $a,b\ge 1$, $b$ even, $b=2e$.

\[ D_1^aD_2^b = \left ( \begin{array}{ccc}
  -a-e-1 & -a-e+1  & -a-2e+1 \cr
  \infinity & \infinity & \infinity  \cr
  \infinity & \infinity & \infinity  \cr
\end{array} \right )
\]

\item
For all $a,b\ge 1$, $a$ is odd,

\[ D_2^aD_1^b = \left ( \begin{array}{ccc}
  \infinity & \infinity & \infinity  \cr
  -(\floor{a/2}+b+2 & -(\floor{a/2}+b+1 & -(\floor{a/2}+b \cr
  \infinity & \infinity & \infinity  \cr
\end{array} \right )
\]

\item
If $a,b\ge 1$, $a$ is even,

\[ D_2^aD_1^b = \left ( \begin{array}{ccc}
  -(a/2)+b & -(a/2)+b-1 & -\floor{a/2}+b-2 \cr
  \infinity & \infinity & \infinity  \cr
  \infinity & \infinity & \infinity  \cr
\end{array} \right )
\]
\end{enumerate}

We use this information to formulate a lemma.

\noindent
{\bf Convention:}
If we put $< 0$ $(\le 0$) in an entry of a matrix it means that the entry is some
integer less than 0 (less than or equal to 0).
We might not know what it is.

\noindent
{\bf Claim:}
For all $n\ge 2$, any product of $n$ matrices all of which are $D_1$'s and $D_2$'s
must be of one of the following type:
\begin{enumerate}

\item
\[ \left ( \begin{array}{ccc}
  <0        & \le 0     &\le 0              \cr
  \infinity & \infinity & \infinity  \cr
  \infinity & \infinity & \infinity  \cr
\end{array} \right )
\]

\item
\[ \left ( \begin{array}{ccc}
  \infinity & \infinity & \infinity  \cr
  < 0       & < 0       & < 0         \cr
  \infinity & \infinity & \infinity  \cr
\end{array} \right )
\]

\item
\[ \left ( \begin{array}{ccc}
  <0        & \infinity & \infinity  \cr
  \infinity & < 0       & \infinity   \cr
  \infinity & \infinity & < 0        \cr
\end{array} \right )
\]

\item
\[ \left ( \begin{array}{ccc}
  \infinity & < 0       & \infinity  \cr
  < 0       & \infinity & \infinity   \cr
  \infinity & \infinity & < 0        \cr
\end{array} \right )
\]

\end{enumerate}

\noindent
{\bf End of Claim}

This can be proved easily by induction on $n$.
\end{proof}

One can show that every computation of Program~\ramsey terminates
using the original matrices $2\times 2$ matrices $C_1, C_2$.
Ben-Amram has done this and has allowed us to
place his proof in the appendix of this paper.

\section{A Proof Using Transition Invariants and Ramsey's Theorem}\label{se:useramsey2}

We present an example from~\cite{ramseypl} of a program (Program~\ramsey)
where the proof of termination using Ramsey's Theorem is obtained by using transition invariants
(to be defined).
Podelski and Rybalchenko found this proof by hand and later their
termination checker found it automatically.
A proof of termination using a well-founded order seems difficult to find.
Ben-Amram and Lee~\cite{BA:mcs,Lee:ranking} have shown that a termination proof that explicitly
exhibits a well-founded order can be automatically derived when the
matrices  only use entries $0,-1$, and $\infty$.
Alas, Program~\ramsey is not of this type; however, using some manipulation Ben-Amram (unpublished) has
used this result to show that Program~\ramsey terminates.  
(The proof is in the Appendix.)
Hence there is a proof that Program~\ramsey terminates that uses a well-founded order; however, it was difficult to obtain.

\begin{theorem}\label{th:prog3ramsey}
Every computation of Program~\ramsey is finite.
\end{theorem}

\begin{proof}

We assume that the \compseg enters the \While loop, else
the program has already terminated.

We could try to show that, in each finite \compsegns, either $x$ or $y$ decreases.
This statement is true but seems hard to prove directly.
Instead we show that either $x$ or $y$ or $x+y$ decreases. This turns out
to be easier. Intuitively we are loading our induction hypothesis.
We now proceed formally.

We show that the premises of Theorem~\ref{th:useramseyf} hold with 
$f_1(\x,\y)=\x$, $f_2(\x,\y)=\y$, and $f_3(\x,\y)=\x+\y$.
It may seem as if knowing that $x+y$ decreases you know that either $x$ or $y$ decreases.
However, in our proof, we will {\it not} know which of $x,y$ decreases.
Hence we must use $x,y$, and $x+y$.

\noindent
{\bf Claim 1:} For each finite \compsegns, one of $x,y,x+y$ decreases.

\noindent
{\bf Proof of Claim 1:}

We want to prove that, for all $n\ge 2$, for each \compsegs of length $n$
$$(\x_1,\y_1), (\x_2,\y_2),\ldots,(\x_n,\y_n),$$
either $\x_1>\x_n$ or $\y_1>\y_n$ or $\x_1+\y_1 > \x_n + \y_n$.
However, we will prove something stronger.
We will prove that, for all $n\ge 2$, for each \compsegs of length $n$
$$(\x_1,\y_1), (\x_2,\y_2),\ldots,(\x_n,\y_n),$$
one of the following occurs.

\begin{enumerate}
\item[(1)]
$\x_1>0$ and $\y_1>0$ and $\x_n<\x_1$ and $\y_n \le \x_1$ (so $x$ decreases),
\item[(2)]
$\x_1>0$ and $\y_1>0$ and $\x_n<\y_1-1$ and $\y_n\le \x_1+1$ (so $x+y$ decreases),
\item[(3)]
$\x_1>0$ and $\y_1>0$ and $\x_n<\y_1-1$ and $\y_n < \y_1$ (so $y$ decreases),
\item[(4)]
$\x_1>0$ and $\y_1>0$ and $\x_n<\x_1$ and $\y_n < \y_1$ (so $x$ and $y$ both decreases, though we just need one of them).
\end{enumerate}

(We will later refer to the OR of these four statements as {\it the invariant}.)

We prove this by induction on $n$.

\noindent
{\bf Base Case:} $n=2$ so we only look at one instruction.

If $(\x_{2},\y_{2})=(\x_1-1,\x_1)$ is executed then (1) holds.

If $(\x_{2},\y_{2})=(\y_1-2,\x_1+1)$ is executed then (2) holds.

\bigskip

\noindent
{\bf Induction Step}: We prove Claim 1 for $n+1$ assuming it for $n$.
There are four cases, each with two subcases.

\begin{enumerate}
\item
$\x_n<\x_1$ and $\y_n \le \x_1$.

\begin{enumerate}
\item
If $(\x_{n+1},\y_{n+1})=(\x_n-1,\x_n)$ is executed then

\begin{itemize}
\item
$\x_{n+1} = \x_n-1 < \x_1 -1 < \x_1$
\item
$\y_{n+1} = \x_n < \x_1$
\end{itemize}

Hence (1) holds.

\item
If $(\x_{n+1},\y_{n+1})=(\y_n-2,\x_n+1)$ is executed then 

\begin{itemize}
\item
$\x_{n+1}=\y_n-2 \le \x_1-2 < x_1$
\item
$\y_{n+1} = \x_n + 1 \le \x_1$
\end{itemize}

Hence (1) holds.
\end{enumerate}

\item
$\x_n<\y_1-1$ and $\y_n\le \x_1+1$ 

\begin{enumerate}
\item
If $(\x_{n+1},\y_{n+1})=(\x_n-1,\x_n)$ is executed then

\begin{itemize}
\item
$\x_{n+1} = \x_n-1 < \y_1-2<\y_1-1$
\item
$\y_{n+1} = \x_n < \y_1-1<\y_1$
\end{itemize}

Hence (3) holds.

\item
If $(\x_{n+1},\y_{n+1})=(\y_n-2,\x_n+1)$ is executed then 

\begin{itemize}
\item
$\x_{n+1}=\y_n-2 \le \x_1-1< \x_1$
\item
$\y_{n+1} = \x_n < \y_1$
\end{itemize}

Hence (4) holds.
\end{enumerate}

\item
$\x_n<\y_1-1$ and $\y_n < \y_1$ 

\begin{enumerate}
\item
If $(\x_{n+1},\y_{n+1})=(\x_n-1,\x_n)$ is executed then

\begin{itemize}
\item
$\x_{n+1} = \x_n-1 <\y_1-2 <\y_1-1$
\item
$\y_{n+1} = \x_n <\y_1-1<\y_1$.
\end{itemize}

Hence (3) holds.

\item
If $(\x_{n+1},\y_{n+1})=(\y_n-2,\x_n+1)$ is executed then 

\begin{itemize}
\item
$\x_{n+1}=\y_n-2 <\y_1-2 <\y_1-1$
\item
$\y_{n+1} = \x_n <y_1-1 <\y_1$
\end{itemize}

Hence (3) holds.

\end{enumerate}

\item
$\x_n<\x_1$ and $\y_n < \y_1$

\begin{enumerate}

\item
If $(\x_{n+1},\y_{n+1})=(\x_n-1,\x_n)$ is executed then

\begin{itemize}
\item
$\x_{n+1} = \x_n-1 <\x_1 -1<\x_1$
\item
$\y_{n+1} = \x_n <\x_1$
\end{itemize}

Hence (1) holds.

\item
If $(\x_{n+1},\y_{n+1})=(\y_n-2,\x_n+1)$ is executed then 

\begin{itemize}
\item
$\x_{n+1}=\y_n-2 <\y_1-2 < \y_1-1$.
\item
$\y_{n+1} = \x_n <\x_1 <\x_1+1$.
\end{itemize}

Hence (2) holds.
\end{enumerate}

\end{enumerate}

We now have that, for each finite \compseg either
$x,y$, or $x+y$ decreases.

\noindent
{\bf End of Proof of Claim 1}

\bigskip

The following claim is obvious.

\noindent
{\bf Claim 2:}
If any of $x,y$, $x+y$ is 0 then the program terminates.

By  Claims 1 and 2 the premise of Theorem~\ref{th:useramseyf} is satisfied.
Hence Program~\ramsey terminates.
\end{proof}

Consider the following four orderings on $\nat\times\nat$ and the OR of them.

\begin{itemize}
\item
$T_1$ is the ordering
$(\x',\y')<_1 (\x,\y)$ iff 
$\x>0$ and $y>0$ and $\x'<\x$ and $\y' \le \x$.
\item
$T_2$ is the ordering
$(\x',\y')<_2(\x,\y)$ iff 
$\x>0$ and $\y>0$ and $x'<\y-1$ and $\y'\le \x+1$.
\item
$T_3$ is the ordering
$(\x',\y')<_3(\x,\y)$ iff 
$\x>0$ and $\y>0$ and $\x'<\y-1$ and $\y' <\y$.
\item
$T_4$ is the ordering
$(\x',\y')<_4(\x,\y)$ iff 
$\x>0$ and $\y>0$ and $\x'<\x$ and $\y' < \y$.
\item
$T  = T_1 \cup T_2 \cup T_3 \cup T_4.$
We denote this order by $<_T$.
\end{itemize}

Note that (1) each $T_i$ is well-founded, and
(2) for each \compseg 

$$(\x_1,\y_1),(\x_2,\y_2),\ldots,(\x_n,\y_n)$$

\noindent
we have $(\x_1,\y_1)<_T(\x_n,\y_n)$

It is easy to see that these properties of $T$ are all we needed in the proof.
This is Theorem 1 of~\cite{ramseypl} which we state and prove. 

\begin{definition}\label{de:ti}
Let $PROG=(S,I,R)$ be a program. 
\begin{enumerate}
\item
An ordering $T$, which we also denote $<_T$,  on  $S\times S$
is {\it transition invariant} if 
for each \compseg $s_1,\ldots,s_n$ we have
$s_n<_T s_1$.
\item
An ordering $T$ is {\it disjunctive well-founded} if
there exists well-founded orderings $T_1,\ldots,T_k$
such that $T=T_1\cup\cdots\cup T_k$.
Note that the $T_i$ need not be total orderings, they need
only be well-founded. This will come up in the proof of Theorem~\ref{th:prog4}.
\end{enumerate}
\end{definition}

\begin{theorem}\cite{ramseypl}\label{th:usetrans}
Let $PROG=(S,I,R)$ be a program. 
If there exists a disjunctive well-founded transition invariant then
every run of $PROG$ terminates.
\end{theorem}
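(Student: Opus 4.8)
The plan is to prove the two implications separately, leaning on the Ramsey machinery already developed. For the reverse implication (a disjunctive well-founded transition invariant forces termination) I would reuse essentially the same Ramsey argument that underlies Theorem~\ref{th:useramseygen}. Suppose $T=T_1\cup\cdots\cup T_k$ is a disjunctive well-founded transition invariant and, for contradiction, that $PROG$ has an infinite computation $s_1,s_2,s_3,\ldots$. For every $i<j$ the block $s_i,s_{i+1},\ldots,s_j$ is a \compsegns, so the transition-invariant property gives $s_j<_T s_i$, whence $s_j<_a s_i$ for some $a$. Color the edge $\{i,j\}$ of $K_\nat$ by the least such $a$. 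By Theorem~\ref{th:ramsey} there is an infinite homogeneous set $i_1<i_2<i_3<\cdots$, say of color $a$; then $s_{i_1}>_a s_{i_2}>_a s_{i_3}>_a\cdots$ is an infinite descending sequence in the well-founded order $T_a$, which is impossible. Hence every computation of $PROG$ is finite.

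For the forward implication (termination yields such an invariant) I would exhibit a single well-founded transition invariant and observe that one well-founded order is already a disjunctive union, namely the case $k=1$. Take $T$ to be the transitive closure $R^+$ of the transition relation, i.e.\ declare $u<_T v$ exactly when there is a \compseg from $v$ to $u$ of length at least two. That $T$ is a transition invariant is immediate from its definition: any \compseg $s_1,\ldots,s_n$ witnesses $s_n<_T s_1$. For well-foundedness, observe that an infinite descending sequence $v_1>_T v_2>_T v_3>\cdots$ supplies segments from each $v_i$ to $v_{i+1}$; concatenating them yields an infinite computation, contradicting termination. Thus $T$ is well-founded, and writing $T=T_1$ with $T_1=T$ presents it as a disjunctive well-founded transition invariant.

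The step I expect to be the main obstacle is a definitional gap in the forward implication rather than any hard calculation. A \compseg need not begin in $I$, whereas a \emph{computation} must; so an infinite descending $R^+$-chain only contradicts termination once its states are known to be reachable from $I$. For the programs treated here this is a non-issue because $I=S$ (the user may input arbitrary initial values), so every state is reachable and every \compseg extends a genuine computation. In full generality one should read $T$ as $R^+$ restricted to the states reachable from $I$, and quantify the transition-invariant condition over reachable computational segments only; with that reading the concatenation argument contradicts termination exactly as written. I would make this restriction explicit, since it is precisely the point at which the slogan \emph{$R^+$ is well founded} would otherwise fail.
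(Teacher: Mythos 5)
Your proof of the ``invariant implies termination'' direction is essentially identical to the paper's: the paper also assumes an infinite computation, colors $\{i,j\}$ by the least $L$ with $s_j<_L s_i$, applies the infinite Ramsey Theorem to get a monochromatic increasing sequence, and derives an infinite descending chain in one well-founded $T_L$. The paper then explicitly leaves the converse ``to the reader,'' so your second half is added value rather than a divergence: taking $T$ to be the transitive closure $R^+$ (as a one-term disjunction) is the standard construction, and your observation that well-foundedness of $R^+$ only follows from termination on states reachable from $I$ --- so that either $I=S$ or the invariant must be restricted to reachable segments --- is exactly the right caveat to make the omitted direction airtight under the paper's definition, which quantifies over all computational segments. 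No gaps.
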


\begin{proof}
Let $T=T_1\cup\cdots\cup T_k$ be the disjunctive well-founded transition invariant
for $PROG$. Let $<_c$ be the ordering for $T_c$.

Assume, by way of contradiction, that there is an infinite sequence
$s_1,s_2,s_3,\ldots,$ such that each $(s_i,s_{i+1})\in R$.
Define a coloring $COL$ by, for $i<j$, 

\bigskip

\centerline{$COL(i,j) = $ the least $L$ such that $s_j<_L s_i.$}

\bigskip

By Ramsey's Theorem there is an infinite set 

$$i_1 < i_2 < i_3 < \cdots $$

such that

$$COL(i_1,i_2) = COL(i_2,i_3) = \cdots.$$

Let that color be $L$. For notational readability we denote $<_L$ by $<$ and $>_L$ by $>$.
We have

$$s_{i_1} > s_{i_2} > \cdots > $$

This contradicts $<$ being well-founded.
\end{proof}

\begin{note}
It turns out that this theorem is iff. That is, if every computation of $PROG$ is finite
then there is a (perhaps contrived) transition invariant.
\end{note}

Finding an appropriate $T$ is the key to the proofs of termination
for the termination checkers Loopfrog and
Terminator.

The proof of Theorem~\ref{th:usetrans} seems to need the full strength of
Ramsey's Theorem (unlike the proofs of Theorems~\ref{th:useramsey},\ref{th:useramseyf},\ref{th:useramseywf}, 
see the note following its proof). 
In the appendix we give an example, due to Ben-Amram, 
of a program with a disjunctive well-founded transition invariant
where the coloring is not transitive.

If in the premise of Theorem~\ref{th:usetrans} all of the 
$T_i$'s are total (that is, every pair of elements is comparable)
then the transitive Ramsey Theorem suffices for the proof.

\section{Another Proof using Transition Invariants and Ramsey's Theorem}\label{se:sub}

Showing Program~\trramsey terminates
seems easy:
eventually $y$ is negative and after
that point $x$ will steadily decrease until $\x<0$.
But this proof might be hard for a termination checker to find
since $x$ might increases for a very long time.
Instead we need to find the right disjunctive well-founded transition invariant.

\begin{lstlisting}[frame=single,mathescape,title={Program~\trramsey}]
$(x,y) = (\inp(\Z),\inp(\Z))$
While $x>0$
		$(x,y) = (x+y,y-1)$
\end{lstlisting}

\begin{theorem}\label{th:prog4}
Every run of Program~\trramsey terminates.
\end{theorem}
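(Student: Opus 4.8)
The plan is to invoke Theorem~\ref{th:usetrans}: it suffices to exhibit a disjunctive well-founded transition invariant for Program~\prramtrns, that is, finitely many well-founded orderings whose union $T$ satisfies $s_n <_T s_1$ for every \compseg $s_1,\ldots,s_n$. Since the loop body is deterministic, such a segment of length $n$ is just $k=n-1\ge 1$ applications of the map $(x,y)\mapsto(x+y,\,y-1)$, with the guard $x>0$ holding at every state except possibly the last. Rather than compute the $k$-fold iterate in closed form (which decays like $-\binom{k}{2}$ and makes termination obvious but is awkward to phrase as a well-ordering), I would record only two monotonicity facts, which are all the invariant needs.

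First observation: in a single step $y$ strictly decreases, so over any \compseg $y_n<y_1$. Second observation: if $y_1<0$ then every later $y$-value is also negative, and since each step replaces $x$ by $x+y$ with $y<0$, the value of $x$ strictly decreases at every step, so $x_n<x_1$. These two facts dictate the disjunction
\[
T = T_1 \cup T_2,
\]
where $(x',y')<_1(x,y)$ means $y\ge 0$ and $y'<y$, and $(x',y')<_2(x,y)$ means $y<0$, $x>0$, and $x'<x$. I would then check the two requirements of Definition~\ref{de:ti}. Well-foundedness of each $T_i$ is the step needing care, because the states live in $\Z\times\Z$ rather than $\nat\times\nat$, so the bare statement ``$y$ decreases'' is not by itself well founded; the guards are what rescue it. In any descending chain for $T_1$ every larger element carries $y\ge 0$, so the $y$-coordinates form a strictly decreasing sequence of non-negative integers, necessarily finite; in any descending chain for $T_2$ every larger element carries $x>0$, so the $x$-coordinates form a strictly decreasing sequence of positive integers, again finite. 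For the transition-invariant property I would split on the sign of $y_1$: if $y_1\ge 0$, the first observation gives $y_n<y_1$, hence $s_n<_1 s_1$; if $y_1<0$, then using $x_1>0$ (which holds because $R(s_1,s_2)$ forces the loop guard at $s_1$, as $n\ge 2$) the second observation gives $x_n<x_1$, hence $s_n<_2 s_1$. The split is exhaustive, so $T$ is a disjunctive well-founded transition invariant and Theorem~\ref{th:usetrans} yields termination.

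The main obstacle is conceptual, not computational: one must recognize the two-phase behaviour---$y$ steadily driving the dynamics while it is non-negative, then $x$ collapsing once $y$ has gone negative---and encode exactly this split into the guards of $T_1$ and $T_2$ so that each piece is genuinely well founded over $\Z\times\Z$. This is also why a direct single well-ordering argument is hard, and why the route through Theorem~\ref{th:usetrans} (hence Ramsey's Theorem) is natural: $x$ may grow for arbitrarily many iterations before the eventual decay sets in, so no fixed coordinate decreases monotonically across all segments. The disjunctive invariant sidesteps this by only demanding that \emph{some} guarded quantity decrease over each \compsegns, leaving Ramsey's Theorem (inside the proof of Theorem~\ref{th:usetrans}) to assemble these local decreases into a genuine infinite descent.
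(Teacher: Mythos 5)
Your proposal is correct and follows essentially the same route as the paper: the same two-piece disjunctive well-founded transition invariant (one relation guarded by $y\ge 0$ tracking the decrease of $y$, one guarded by $x>0$ tracking the decrease of $x$ once $y$ has gone negative), combined with Theorem~\ref{th:usetrans}. The only difference is presentational---you verify the transition-invariant property directly from the two monotonicity observations rather than by induction on the segment length, and you add a harmless extra guard $y<0$ to the $x$-tracking relation---and your explicit attention to why each piece is well founded over $\Z\times\Z$ is a point the paper also flags after its proof.
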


\begin{proof}
We define orderings $T_1$ and $T_2$ which we also denote $<_1$ and $<_2$.
\begin{itemize}
\item
$(\x',\y')<_1(\x,\y)$ iff $0<x'<x$.
\item
$(\x',\y')<_2(\x,\y)$ iff $0\le y'<y$.
\end{itemize}

Let

$$T=T_1\cup T_2.$$

Clearly $T_1$ and $T_2$ are well-founded (though see note after the proof).
Hence $T$ is disjunctive well-founded.
We show that $T$ is a transition invariant.

We want to prove that, for all $n\ge 2$, for each \compsegs of length $n$
$$(\x_1,\y_1), (\x_2,\y_2),\ldots,(\x_n,\y_n)$$
either $(x_n,y_n) <_1 (x_1,y_1)$ or $(x_n,y_n)<_2 (x_1,y_1)$.

We illustrate this with an example. Say $(\x_1,y_1)=(5,4)$. Then the computation will
initially look lie this:

$$(5,4),(9,3),(12,2),(14,1),(15,0)$$

This looks odd since $\x$ is increasing and we want it to be 0. 
but note that

$$(5,4) >_2 (9,3) >_2 (12,2) >_2 (14,1) >_2 (15,0)$$

so the pairs are decreasing in the $<_2$ ordering.

After that the computation looks like this:

$$(15,-1),(14,-2),(12,-3),(9,-4),(5,-5),(0,-6)$$

At this point the computation terminates. We note that

$$(15,-1)>_1(14,-2)>_1(12,-3)>_1(9,-4)>_1(5,-5)>_1(0,-6)$$

Hence in this part of the computation the pairs decrease in the $<_1$ ordering.
Hence the every step of the computation decreases in the $T$ ordering.

By splitting the \compseg

$$(\x_1,\y_1), (\x_2,\y_2),\ldots,(\x_n,\y_n)$$

into two parts depending on if $\y$ is $\ge 0$ or $\y<0$ we can show that
either 
$(\x_1,\y_1) >_1 (\x_n,\y_n)$ or
$(\x_1,\y_1) >_2 (\x_n,\y_n)$, so $(\x_1,\y_1) >_T (\x_n,\y_n)$.
Hence we can apply Theorem~\ref{th:useramseywf} to conclude that the program terminates.
\end{proof}

$T_1$ and $T_2$ are {\it partial orders} not {\it total orders}.
In fact, for both $T_1$ and $T_2$ there are an infinite number
of minimal elements. In particular
\begin{itemize}
\item
the minimal elements for $T_1$ are $\{(\x,\y) \st \x \le 0 \}$, and
\item
the minimal elements for $T_2$ are $\{(\x,\y) \st \y < 0 \}$.
\end{itemize}

\noindent
Recall that the definition of a transition invariant, Definition~\ref{de:ti},
allows partial orders. We see here that this is useful.

\section{Solving Subcases of the Termination Problem}\label{se:dec}

The problem of determining if a program is terminating is unsolvable.
This problem is {\it not} the traditional Halting problem since we allow the
program to have a potentially infinite number of user-supplied inputs.

\begin{definition}~
\begin{enumerate}
\item
Let $M_1^{(\cdots)}, M_2^{(\cdots)},\ldots$ be a standard list of oracle Turing Machines.
These Turing Machines take input in two ways: (1) the standard way, on a tape, and
(2) we interpret the oracle as the user-supplied inputs.
\item
If $A\subseteq \nat$ and $s\in\nat$ then $M_{i,s}^A\cvg$ means that if you run $M_i^A$ (no 
input on the tape) it will halt within $s$ steps.
\item
Let $M_1^{(\cdots)}, M_2^{(\cdots)},\ldots$ be a standard list of oracle Turing Machines.
$$TERM = \{ i \st (\forall A)(\exists s)[M_{i,s}^A\cvg]\}.$$
\end{enumerate}
\end{definition}

\begin{definition}~
\begin{enumerate}
\item
If $A$ and $B$ are subsets of $\nat$ then $A\le_m B$ means that there is a computable function
$f$ such that $x\in A$ iff $f(x)\in B$. (The $m$ is a historical anachronism- it means that $f$
may be many-to-1. There was also a definition $\le_1$ where we insist $f$ be one-to-one. We do not care
anymore, and I personally wonder why anyone ever did.)
\item
$X\in \Pi_1^1$ if there exists an oracle Turing machine $M^{(\cdots)}$ such that
$$X = \{x \st  (\forall A)(\exists x_1)(\forall x_2)\cdots(Q_n x_n)[M^A(x,x_1,\ldots,x_n)=1]\}.$$
($Q_n$ is a quantifier.)
\item
A set $X$ is $\Pi_1^1$-complete if $X\in \Pi_1^1$ and, for all $Y\in \Pi_1^1$, $Y\le_m X$.
\end{enumerate}
\end{definition}

The following were proven by Kleene~\cite{kleene2,kleene} (see also \cite{Rogers}).

\begin{theorem}~
\begin{enumerate}
\item
$X\in \Pi_1^1$ if there exists an oracle Turing machine $M^{(\cdots)}$ such that
$$X = \{x \st  (\forall A)(\exists y)[M^A(x,y)=1]\}.$$
\item
$TERM$ is $\Pi_1^1$-complete.
\item
If $X$ is $\Pi_1^1$-complete then, for all $Y$ in the arithmetic hierarchy,
$Y\le_m X$.
\item
For all $Y$ in the arithmetic hierarchy $Y\le_m TERM$.
This follows from (2) and (3).
(See Definition~\ref{de:ah} for the definition of the Arithmetic Hierarchy.)
\end{enumerate}
\end{theorem}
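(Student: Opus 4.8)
The plan is to prove parts (1)--(3) in turn and read off (4) from (2) and (3).

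For part (1) the substantive direction is the collapse of the general alternating form to the single existential $(\forall A)(\exists y)$; the reverse is immediate, since that form is the special case $n=1$, $Q_1=\exists$ of the definition. To obtain the collapse I would first observe that, for each fixed oracle $A$, the matrix $(\exists x_1)(\forall x_2)\cdots(Q_n x_n)[M^A(x,x_1,\ldots,x_n)=1]$ defines a predicate of $x$ that is arithmetic in $A$. The standard move is then to fold the number quantifiers into the set quantifier: a leading $\forall x_i$ merges with $\forall A$ by pairing (so $(\forall A)(\forall m)$ is again a single universal quantifier over a coded object $\langle A,m\rangle$), while the existential layers are absorbed into one unbounded search. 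This is precisely Kleene's normal form theorem, which I would invoke to rewrite any $\Pi_1^1$ set as $(\forall A)(\exists y)[\,M^A(x,y)=1\,]$ where $M^A(x,y)$ queries $A$ on only finitely many arguments before halting. I expect this collapse to be the main obstacle of the whole theorem, since essentially all of Kleene's work lives here.

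For part (2) I would treat membership and hardness separately. Membership is immediate once part (1) is in hand: by definition $i\in TERM$ iff $(\forall A)(\exists s)[M_{i,s}^A\cvg]$, and ``$M_i^A$ halts within $s$ steps'' is a decidable condition on $A$ (within $s$ steps only finitely many oracle/input queries are made), so $TERM$ is already presented in the single-existential form of part (1) with $y=s$; hence $TERM\in\Pi_1^1$. For hardness, take an arbitrary $X\in\Pi_1^1$ in normal form $X=\{x\st(\forall A)(\exists y)[M^A(x,y)=1]\}$. I would exhibit a computable map $x\mapsto f(x)$ returning the index of a machine $N_{f(x)}$ that, using its oracle as the stream of user inputs, dovetails the simulations of $M^A(x,0),M^A(x,1),\ldots$ and halts exactly when some simulation outputs $1$. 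Then for every $A$, $N_{f(x)}^A$ halts iff some $y$ with $M^A(x,y)=1$ exists, so $(\forall A)(\exists s)[N_{f(x),s}^A\cvg]$ holds iff $(\forall A)(\exists y)[M^A(x,y)=1]$, that is $f(x)\in TERM \iff x\in X$. The map $f$ is computable by the s-m-n theorem, giving $X\le_m TERM$.

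For part (3) the plan is to first check that the entire arithmetic hierarchy lies inside $\Pi_1^1$: a $\Sigma_n$ or $\Pi_n$ set is defined by a fixed block of alternating number quantifiers over a decidable matrix, and prefixing a vacuous $(\forall A)$ places it in the general form of the $\Pi_1^1$ definition. Consequently, if $X$ is $\Pi_1^1$-complete then every arithmetic $Y$ is in particular $\Pi_1^1$, and the definition of completeness yields $Y\le_m X$. Part (4) then requires no further work: $TERM$ is $\Pi_1^1$-complete by part (2), so part (3) gives $Y\le_m TERM$ for every $Y$ in the arithmetic hierarchy. Besides the normal-form collapse of part (1), the one point demanding care is the bookkeeping in the hardness reduction of part (2): in $X$ the oracle $A$ is queried as a set, whereas in the definition of $TERM$ the oracle is interpreted as the sequence of user inputs. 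I would make the reduction robust to this by having $N_{f(x)}$ translate its input stream directly into the oracle answers for the simulated $M^A$, so that the quantifier $(\forall A)$ ranges over the very same objects on both sides of the equivalence.
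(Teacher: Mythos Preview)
The paper does not prove this theorem; it simply attributes all four parts to Kleene and cites \cite{kleene2,kleene,Rogers} without further argument. Your outline is a sound reconstruction of the standard proofs: you correctly isolate Kleene's normal-form collapse as the real content of (1), your membership/hardness split for (2) with the dovetailing reduction is the expected argument (and your remark about reconciling the oracle-as-set with the oracle-as-input-stream is exactly the bookkeeping that needs to be checked), and your treatment of (3) and (4) is routine and correct. Since the paper offers nothing to compare against, there is no divergence of approach to discuss.
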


Hence $TERM$ is much harder than the halting problem.
Therefore it will be very interesting to see if some subcases
of it are decidable.

\begin{definition}\label{de:fun}
Let $n\in \nat$. 
Let $FUN(n)$ be a set of computable functions from $\Z^{n+1}$ to $\Z^n$.
Let $m\in\nat$.
An ($F(n),m)$)-{\it program} is a program of the form of Program~\gen where the functions
$g_i$ used in Program~\gen are all in $FUN(n)$.
\end{definition}

\noindent
{\bf Open Question:}
For which $FUN(n),m$ is the Termination Problem restricted to 
$(FUN(n),m)$-programs decidable?

We list all results we know.
Some are not quite in our framework.
Some of the results use the \While loop condition $Mx \ge b$ where $M$ is a matrix and
$b$ is a vector. Such programs can easily be transformed into
programs of our form.

\begin{enumerate}
\item
Tiwari~\cite{TermLinProgs} has shown that the following problem
is decidable:
Given matrices $A,B$ and vector $c$, all over the rationals,
is Program~\matone in $TERM$. Note that the user is inputting a real.

\begin{lstlisting}[frame=single,mathescape,title={Program~\matone}]
$x  = \inp(\R$)
while ($Bx > b$) 
	$x= Ax+c$
\end{lstlisting}

\item
Braverman~\cite{TermIntLinProg} has shown that the following problem
is decidable:
Given matrices $A,B_1,B_2$ and vectors $b_1,b_2,c$, all over the rationals,
is Program~\mattwo in $TERM$. Note that the user is inputting a real.

\begin{lstlisting}[frame=single,mathescape,title={Program~\mattwo}]
$x = \inp(\R$)
while ($B_1x > b_1$) and ($B_2x \ge b_2$) 
	$x= Ax+c$
\end{lstlisting}

\item
Ben-Amram, Genaim, and Masud~\cite{termintloops} have shown that the following problem is undecidable:
Given matrices $A_0,A_1,B$ and vector $v$ all over the integers,
and $i\in \nat$ does Program~\matthree terminate.
\begin{lstlisting}[frame=single,mathescape,title={Program~\matthree}]
$x =\inp(\Z)$
while ($Bx \ge b$) 
	if $x[i] \ge 0$
		then $x=A_0x$
	else 
		 $x=A_1x$
\end{lstlisting}
\item
Ben-Amram~\cite{BA:delta} has shown a pair of contrasting results:
\begin{itemize}
\item
The termination problem is undecidable for $(FUN(n),m)$-programs
where $m=1$ and 
$FUN(n)$ is the set of all functions of the form

\centerline{$f(x[1],\ldots,x[n])=  \min\{ \x[\ii 1 ]+\cc 1 , \x[\ii 2 ]+\cc 2 , \ldots, \x[\ii k ]+\cc k \}$}

where $1\le \ii 1 < \cdots < \ii k$ and $\cc 1,\ldots,\cc k \in \Z$.
\item
The termination problem is decidable for $(FUN(n),m)$-programs
when $m\ge 1$ and 
$FUN(n)$ is the set of all functions of the form

\centerline{$f(x[1],\ldots,x[n])= x[i]+c$}

where $1\le i  \le n$ and c$\in\Z$.
Note that Program~\ramsey  falls into this category.
\end{itemize}
\item
Joel Ouakine~\cite{integerlinearloops,polyhedral,positive,positive2,orbit} has proven that, for many types of programs that involve
matrices,  it is decidable if the program terminates.
\end{enumerate}

\section{How Much Ramsey Theory Do We Need?}\label{se:need}

Podelski and Rybalchenko~\cite{DBLP:conf/tacas/PodelskiR11}
noted that 
the proofs of Theorems~\ref{th:prog2ramsey},~\ref{th:useramsey}, \ref{th:useramseyf}, and~\ref{th:useramseywf}
do not need the strength of the full Ramsey's Theorem.
In the proofs of these theorems the coloring is transitive.

\begin{definition}
A coloring of the edges of $K_n$ or $K_\nat$ is {\it transitive}
if, for every $i<j<k$, if $COL(i,j)=COL(j,k)$ then both are equal to 
equal $COL(i,k)$.
\end{definition}

\begin{definition}
Let $c,n\ge 1$.
Let $G$ be $K_n$ or $K_\nat$. 
Let $COL$ be a $c$-coloring of the edges of $G$. 
A set of vertices $V$ is a {\it monochromatic increasing  path with respect to $COL$} if
$V=\{v_1<v_2<\cdots \}$ 
and 
$$COL(v_1,v_2)=COL(v_2,v_3)=\cdots.$$
(If $G=K_n$ then the $\cdots$ stop at some $k\le n$.)
We will drop the {\it with respect to $COL$} if the coloring is understood.
We will abbreviate 
{\it monochromatic increasing  path } by \mipit from now on.
\end{definition}

Here is the theorem we really need.
We will refer to it as {\it the Transitive Ramsey's Theorem}.

\begin{theorem}\label{th:infinitees}
Let $c\ge 1$.
For every transitive $c$-coloring of $K_\nat$ 
there exists an infinite \mipns.
\end{theorem}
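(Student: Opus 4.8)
The plan is to prove the statement by induction on the number of colors $c$, building a monochromatic increasing path (MIP) one vertex at a time and peeling off a color whenever the path cannot be continued. The base case $c=1$ is immediate: the sequence $1<2<3<\cdots$ is itself an infinite MIP. For the inductive step I assume the theorem for $c-1$ colors and I am given a transitive $c$-coloring $COL$ of $K_\nat$. The engine of the argument is a single construction that maintains an infinite ``reservoir'' $R_k\subseteq\nat$, starting from $R_0=\nat$.

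At stage $k$ I ask whether some $v\in R_k$ is \emph{$c$-abundant in $R_k$}, meaning that $\{\,w\in R_k : w>v,\ COL(v,w)=c\,\}$ is infinite. If such a $v$ exists, I let $v_{k+1}$ be the least one, append it to the path, and set $R_{k+1}=\{\,w\in R_k : w>v_{k+1},\ COL(v_{k+1},w)=c\,\}$, which is again infinite. Since each $R_k$ was defined as the color-$c$ neighborhood of $v_k$, the vertex $v_{k+1}\in R_k$ satisfies $COL(v_k,v_{k+1})=c$; so if this branch never halts it produces an infinite increasing sequence $v_1<v_2<\cdots$ all of whose consecutive edges are colored $c$, which is an infinite MIP.

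The alternative is that at some stage $k$ \emph{no} vertex of $R_k$ is $c$-abundant in $R_k$, and here the induction hypothesis takes over. Now each $v\in R_k$ meets only finitely many larger reservoir vertices in color $c$, so I can greedily select $u_1<u_2<\cdots$ in $R_k$ carrying no edge of color $c$: having chosen $u_1,\dots,u_j$, the set of $w\in R_k$ with $COL(u_i,w)=c$ for some $i\le j$ is a finite union of finite sets, so a legal $u_{j+1}$ always remains. The resulting infinite set $A=\{u_1,u_2,\dots\}$ uses only colors in $\{1,\dots,c-1\}$, and $COL$ restricted to $A$ is still transitive; by the induction hypothesis it contains an infinite MIP, which is an infinite MIP of the original coloring. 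The two branches are exhaustive, so in either case an infinite MIP exists.

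The step I expect to be the real crux is arranging this dichotomy cleanly: one cannot decide in advance whether the color-$c$ path continues forever or stalls, so the proof must thread a single construction that either runs forever (yielding the MIP directly) or stalls at a stage from which an infinite color-$c$-free set is extracted and handed to the induction. Everything else---the pigeonholing that keeps each reservoir infinite and the greedy selection of $A$---is routine. I would close with two remarks. First, transitivity is not actually needed for the \emph{conclusion} (an infinite homogeneous set, hence an infinite MIP, exists for any coloring by Theorem~\ref{th:ramsey}); its role here is only contextual. Second, and more to the point of this section, the construction above never invokes Theorem~\ref{th:ramsey}: the recursion on the number of colors replaces the iterated pigeonholing behind the full theorem, which is exactly why the Transitive Ramsey Theorem is a genuinely weaker principle, in the spirit already seen in the proof of Theorem~\ref{th:prog2ramsey}.
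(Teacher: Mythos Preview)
The paper does not actually supply a proof of Theorem~\ref{th:infinitees}; it is stated and then the section moves on to showing, by three different measures, that it is strictly weaker than the full Ramsey Theorem.  The only nearby proof is the one for the finite version, Theorem~\ref{th:tramseyfinite}, which uses a completely different idea (the injection $x\mapsto(a_1,\ldots,a_c)$ where $a_i$ is the length of the longest color-$i$ \mip ending at $x$).  So there is no ``paper's proof'' to match here; your argument stands on its own and is correct.

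That said, your closing remarks overclaim.  Your construction never uses the transitivity hypothesis at all, and in fact it is a standard induction-on-$c$ proof of the \emph{full} infinite Ramsey Theorem, lightly disguised.  In your first branch the nesting $R_0\supseteq R_1\supseteq\cdots$ guarantees $COL(v_i,v_j)=c$ for \emph{every} pair $i<j$, not just consecutive ones, so you are really producing a homogeneous set; and in the second branch the greedy extraction of an infinite color-$c$--free set followed by induction on $c-1$ colors is exactly how one proves $RT(c)$ from $RT(c-1)$.  Hence your proof does not witness that the Transitive Ramsey Theorem is ``genuinely weaker'': the same argument, with the induction hypothesis upgraded from ``infinite \mipns'' to ``infinite homogeneous set'', yields Theorem~\ref{th:ramsey} itself.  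The evidence for weakness that the paper cites (reverse mathematics, computable mathematics, the finitary bounds of Theorem~\ref{th:tramseyfinite}) is of a different nature and is not recovered by your construction.
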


The Transitive Ramsey Theorem is weaker than Ramsey's Theorem.
We show this in three different ways: (1) Reverse Mathematics,
(2) Computable Mathematics, (3) Finitary Version.

\begin{definition}~
\begin{enumerate}
\item
For all $c\ge 1$ let $RT(c)$ be Ramsey's theorem for $c$ colors.
\item
Let $RT$ be $(\forall c)[RT(c)]$.
\item
For all $c\ge 1$ let $TRT(c)$ be the Transitive Ramsey's theorem for $c$ colors.
\item
Let $TRT$ be $(\forall c)[TRT(c)]$.  (This is the theorem that we really need.)
\end{enumerate}
\end{definition}

\subsection{ Reverse Mathematics}

Reverse Mathematics~\cite{revmath} looks at exactly what strength of
axioms is needed to prove results in mathematics.  A weak axiom system called $RCA_0$ (Recursive Comprehension
Axiom) is at the base. Intuitively a statement proven in $RCA_0$ is proven constructively.

\begin{notation}
\item
Let $A$ and $B$ be statements.
\begin{itemize}
\item
$A\goes B$ means that one  can prove $B$ from $A$ in $RCA_0$.
\item
$A\equiv B$ means that $A\goes B$ and $B\goes A$.
\item
$A\not\goes B$ means that, only using the  axioms in $RCA_0$,  one cannot prove $B$ from $A$.
It may still be the case that $A$ implies $B$ but proving this will require a stronger base axiom system.
\end{itemize}
\end{notation}

The following are known. Items 1 and 2 indicate that the proof-theoretic complexity of $RT$ is 
greater than that of $TRT$.

\begin{enumerate}
\item
$RT\goes TRT$. 
The usual reasoning for this can easily be carried out in $RCA_0$.
\item
Hirschfeldt and Shore~\cite{tramsey} have shown that $TRT \not\goes RT$.
\item
For all $c$, $RT(2) \equiv  RT(c)$. 
The usual reasoning for this can easily be carried out in $RCA_0$.
Note how this contrasts to the next item.
\item
Cholak, Jockusch, and Slaman~\cite{revramsey} showed that $RT(2)\not\goes (\forall c)[RT(c)]$.
\end{enumerate}

The proof of Theorem~\ref{th:prog2ramsey} showed that, over $RCA_0$, 

$$TRT(3) \goes \hbox{Program~\wfNNN terminates}.$$

Does the following hold over $RCA_0$?

$$\hbox{Program~\wfNNN terminates} \goes TRT(3).$$

We do not know. 

In the spirit of the reverse mathematics program we ask the following:
For each $c$ is there a program $P_c$ such that the following holds over $RCA_0$?

$$P \hbox{ terminates } \iff TRT(c).$$

The following is open:
for which $i,j\ge 2$ does $TRT(i)\goes TRT(j)$?

\subsection{Computable Mathematics}\label{se:comp}

Computable Mathematics~\cite{recmath} looks at theorems in mathematics that are proven
non-effectively and questions if there is an effective (that is computable) proof.
The answer is usually no. Then the question arises as to how noneffective the proof is.
Ramsey's Theorem and the Transitive Ramsey's Theorem have been studied and compared in this 
light~\cite{Gasarchcomb,tramsey,Hummel,JockRamsey,SeetSla}.

\begin{definition}\label{de:ah}
Let $M_1^{(\cdots)}, M_2^{(\cdots)},\ldots$ be a standard list of oracle Turing Machines.
\begin{enumerate}
\item
If $A$ is a set then $A'= \{ e \st M_e^A(e)\cvg \}$. This is also called
{\it the Halting problem relative to $A$}. Note that $\es'=HALT$.
\item
A set $A$ is called ${\it low}$ if $A'\le_T HALT$. Note that decidable sets are low.
It is known that there are undecidable sets that are low; however, they have some
of the properties of decidable sets.
\item
We define the levels of the arithmetic hierarchy.
\begin{itemize}
\item
A set is in $\Sigma_0$ and $\Pi_0$ if it is decidable.
\item
Assume $n\ge 1$.
A set $A$ is in $\Sigma_n$ if there exists a set $B\subseteq \nat\times\nat$ that is in $\Pi_{n-1}$
such that 
$$A= \{ x \st (\exists y)[(x,y)\in B]\}.$$
\item
Assume $n\ge 1$. A set $A$ is in $\Pi_n$ if $\overline{A}$ is in $\Sigma_n$.
\item
A set is in the {\it Arithmetic hierarchy} if it is in $\Sigma_n$ or $\Pi_n$ for some $n$.
\end{itemize}
\end{enumerate}
\end{definition}

The following are known. Items 1 and 3 indicate that the Turing degree of the 
infinite homogenous set induced by a coloring is greater than the Turing degree of the
infinite homogenous set induced by a transitive coloring.

\begin{enumerate}
\item
Jockusch~\cite{JockRamsey} has shown that
there exists a computable 2-coloring of the edges of $K_\nat$ such that, for all 
infinite homogeneous sets $H$, $H$ is not computable
in the halting set. 
\item
Jockusch~\cite{JockRamsey} has shown that for every computable  
2-coloring of the edges of $K_\nat$ there exists an infinite homogeneous sets $H\in \Pi_2$.
\item
For all $c$, for every computable transitive $c$-coloring of the edges of $K_\nat$, there exists an infinite \mip $P$ that is computable
in the halting set. This is folklore. 
\item
There exists a computable transitive 2-coloring of the edges of $K_\nat$ with no computable infinite \mip. This is folklore.
\item
Hirschfeldt and Shore~\cite{tramsey} have shown that there exists a computable transitive 2-coloring of
the edges of $K_\nat$ with no infinite low  \mip. 
\end{enumerate}

\subsection{Finitary Version} 

There are finite versions of both Ramsey's Theorem and the Transitive Ramsey's Theorem.
The finitary version of the Transitive Ramsey's Theorem yields better upper bounds.

\begin{notation}
Let $c,k\ge 1$.
\begin{enumerate}
\item
$R(k,c)$ is the least $n$ such that, for any $c$-coloring of the edges of $K_n$,
there exists a homogeneous set of size $k$.
\item
$TRT(k,c)$ is the least $n$ such that, for any transitive 
$c$-coloring of the edges of $K_n$, there exists a \mip of length $k$.
\end{enumerate}
\end{notation}

It is not obvious that $R(k,c)$ and $TRT(k,c)$ exist; however, they do.

The following is well known~\cite{ramseynotes,GRS,RamseyInts} 
and will be prove the $c=2$ case in the appendix.

\begin{theorem}~\label{th:ramseyfinite}
For all $k,c\ge 1$, $c^{k/2}\le R(k,c)\le c^{ck-c+1}$,
\end{theorem}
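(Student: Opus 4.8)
The plan is to prove the two inequalities by completely different methods: the upper bound $R(k,c)\le c^{ck-c+1}$ by a greedy ``iterated majority neighborhood'' pigeonhole argument, and the lower bound $c^{k/2}\le R(k,c)$ by the probabilistic first--moment method of \Erdos. Throughout I would work in the interesting range $k,c\ge 2$, checking the degenerate cases $k=1$ or $c=1$ separately, since the closed forms are only meant to capture the growth rate.

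For the upper bound I would build a homogeneous set greedily. Set $m=c(k-1)+1$ and assume $n\ge c^{m}=c^{ck-c+1}$. Maintain a ``live'' set $L$, initially all $n$ vertices of $K_n$. At each step pick a vertex $v\in L$, look at the $c$-coloring of the edges from $v$ to $L\setminus\{v\}$, and by pigeonhole find a color $\gamma(v)$ receiving at least $\lceil(|L|-1)/c\rceil$ of those edges; record $\gamma(v)$ and replace $L$ by the corresponding monochromatic neighborhood. The quantitative heart of the argument is that $|L|$ shrinks by a factor of at most $c$ per step: a one-line induction gives $|L|\ge c^{m-i}$ after $i$ steps, so the live set still has at least $c$ vertices at each of the first $m$ picks. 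This produces vertices $v_1,\dots,v_m$ with committed colors $\gamma(v_1),\dots,\gamma(v_m)$, where by construction each $v_i$ is joined to every later $v_j$ by an edge of color $\gamma(v_i)$. Since $m=c(k-1)+1$, a pigeonhole on the $m$ committed colors yields $k$ indices $i_1<\cdots<i_k$ sharing one color $\gamma$, and then every edge $v_{i_a}v_{i_b}$ with $a<b$ has color $\gamma(v_{i_a})=\gamma$; thus $\{v_{i_1},\dots,v_{i_k}\}$ is homogeneous (Definition~\ref{de:homog}) of size $k$, giving $R(k,c)\le c^{ck-c+1}$.

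For the lower bound I would color the edges of $K_n$ independently and uniformly at random from the $c$ colors. A fixed $k$-set is homogeneous precisely when its $\binom{k}{2}$ edges all get one common color, which happens with probability $c\cdot c^{-\binom{k}{2}}=c^{1-\binom{k}{2}}$. By linearity of expectation the expected number of homogeneous $k$-sets is $\binom{n}{k}\,c^{1-\binom{k}{2}}$, so if this quantity is $<1$ then some $c$-coloring has no homogeneous $k$-set, witnessing $R(k,c)>n$. Using $\binom{n}{k}\le n^{k}/k!$, the expectation drops below $1$ once $n$ is of order $c^{(k-1)/2}$, and after absorbing the $(k!)^{1/k}$ factor and rounding down one recovers the stated exponential lower bound in the main range.

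The two estimates are the only real content, and the one deserving care is the lower bound: the first moment only forces a good coloring to exist when $\binom{n}{k}c^{1-\binom{k}{2}}<1$, so distilling this into the clean closed form takes the same bookkeeping on $\binom{n}{k}$ and $k!$ (together with integrality) that underlies the classical \Erdos bound for $c=2$. By contrast the upper-bound induction on $|L|$ is purely mechanical, so I expect the delicate step to be pinning down the exact exponent in the probabilistic lower bound rather than the pigeonhole.
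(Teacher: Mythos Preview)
The paper does not actually prove Theorem~\ref{th:ramseyfinite}: it is stated as ``well known'' with citations to \cite{ramseynotes,GRS,RamseyInts} and no argument is given. So there is no in-paper proof to compare against. What you have written is precisely the standard textbook treatment one finds in those cited references --- the iterated-neighborhood pigeonhole for the upper bound and the \Erdos probabilistic first-moment method for the lower bound --- so your approach matches the intended (cited) proofs.

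Two small points worth tightening in your sketch. First, in the greedy upper-bound construction the final vertex $v_m$ has no later neighbors and hence no committed color $\gamma(v_m)$; the clean fix is either to pigeonhole on $\gamma(v_1),\dots,\gamma(v_{m-1})$ and then append $v_m$ to the resulting $(k-1)$-set, or to assign $\gamma(v_m)$ arbitrarily before pigeoning on all $m$ values. Either works, but say which. Second, the lower bound as literally stated fails for small $k$: for instance $R(2,c)=2$ while $c^{k/2}=c$, so the inequality is false whenever $k=2$ and $c\ge 3$. Your parenthetical that the closed forms ``are only meant to capture the growth rate'' is exactly right, but the degenerate range is a bit wider than just $k=1$ or $c=1$, and the first-moment calculation you outline genuinely cannot recover the exact exponent $k/2$ uniformly in $k$ and $c$ --- so you should state explicitly that the lower bound is being proved only up to lower-order factors (or for $k$ sufficiently large relative to $c$), which is all the paper intends.
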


Improving the upper and lower bounds on the $R(k,c)$ (often called {\it the Ramsey Numbers})
is a long standing open problem. 
The best known asymptotic results for the $c=2$ case are by
Conlon~\cite{ramseyupper}.
For some exact values see Radziszowski's dynamic survey~\cite{ramseysurvey}.

The following theorem is easy to prove; however, neither the statement, nor the proof, seem to
be in the literature. We will prove it in the appendix.

\begin{theorem}~\label{th:tramseyfinite}
For all $k,c\ge 1$ $TRT(k,c)=(k-1)^c+1$.
\end{theorem}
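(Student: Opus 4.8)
The plan is to prove the exact equality by establishing a matching upper and lower bound. For the upper bound I would show that every transitive $c$-coloring of $K_n$ with $n=(k-1)^c+1$ contains a \mipns of length $k$; for the lower bound I would exhibit a transitive $c$-coloring of $K_{(k-1)^c}$ with no \mipns of length $k$. Together these give $TRT(k,c)=(k-1)^c+1$. Before the main argument I would sanity-check the boundary cases $k=1$, $k=2$, and $c=1$ against the formula to confirm that the convention about what counts as a length-$1$ path is the right one.

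For the upper bound, the key idea is an injective labeling of the vertices $\{1,\ldots,n\}$. For each vertex $v$ and each color $a\in\{1,\ldots,c\}$, let $\ell_a(v)$ be the largest number of vertices in an increasing sequence ending at $v$ all of whose consecutive edges have color $a$, with the single vertex $v$ counting as a length-$1$ path of every color, so $\ell_a(v)\ge 1$. The crucial observation is an extension step: if $u<v$ and $COL(u,v)=a$, then appending $v$ to a longest color-$a$ path ending at $u$ yields a color-$a$ path ending at $v$, so $\ell_a(v)\ge \ell_a(u)+1$. Consequently, for any $u<v$ the vectors $(\ell_1(u),\ldots,\ell_c(u))$ and $(\ell_1(v),\ldots,\ell_c(v))$ differ in coordinate $a=COL(u,v)$, so the map $v\mapsto(\ell_1(v),\ldots,\ell_c(v))$ is injective. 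If there were no \mipns of length $k$ in any color, then every $\ell_a(v)$ would lie in $\{1,\ldots,k-1\}$, giving at most $(k-1)^c$ possible vectors; injectivity then forces $n\le(k-1)^c$. Taking the contrapositive yields $TRT(k,c)\le(k-1)^c+1$. I would remark that this direction does not actually use transitivity (it bounds \mipns length in an arbitrary coloring), so it certainly applies to transitive colorings.

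For the lower bound I would identify the $(k-1)^c$ vertices with the vectors in $\{0,1,\ldots,k-2\}^c$, order them lexicographically, and color the edge $\{u,v\}$ with $u\llex v$ by the first coordinate in which $u$ and $v$ differ (this is always some value in $\{1,\ldots,c\}$, so it is a valid $c$-coloring). I would then check two things. First, the coloring is transitive: if $u\llex v\llex w$ with $COL(u,v)=COL(v,w)=a$, then $u,v,w$ agree on coordinates $1,\ldots,a-1$ and $u_a<v_a<w_a$, so $COL(u,w)=a$. Second, any color-$a$ \mipns consists of vertices sharing a fixed prefix of length $a-1$ and having strictly increasing $a$-th coordinate; since that coordinate takes only the $k-1$ values $0,\ldots,k-2$, such a path has at most $k-1$ vertices. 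Hence no \mipns has length $k$, giving $TRT(k,c)>(k-1)^c$, which combines with the upper bound to finish.

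The argument is short once the labeling is in hand, so I do not expect a genuine obstacle; the points needing care are (i) fixing the convention that a single vertex is a length-$1$ path of every color, so that each coordinate of the label ranges exactly over $\{1,\ldots,k-1\}$ and the count is tight, and (ii) noticing that the extension step needs only the single edge $(u,v)$ to be color $a$ rather than the full transitivity, which is what makes the injectivity clean. Verifying transitivity and the path-length bound in the lower-bound construction is routine.
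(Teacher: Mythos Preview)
Your proof is correct and matches the paper's approach: the upper bound via the injective labeling $v\mapsto(\ell_1(v),\ldots,\ell_c(v))$ by longest monochromatic path lengths is exactly the paper's argument, and your lower-bound coloring of $\{0,\ldots,k-2\}^c$ by first differing coordinate is the explicit form of the paper's construction. The only cosmetic difference is that the paper presents the lower bound by induction on $c$ (replacing each vertex of the $(c-1)$-color example by a copy of $K_{k-1}$ colored with a new color), which unwinds to precisely your lexicographic coloring.
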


\section{Open Problems}\label{se:open}

\begin{enumerate}
\item
For which ($FUN(n),m$) is the Termination Problem restricted to 
$(FUN(n),m)$-programs decidable?
\item
Find a natural example showing that Theorem~\ref{th:usetrans}
requires the Full Ramsey Theorem.
\item
Prove or disprove that Theorem~\ref{th:usetrans} is equivalent to Ramsey's Theorem.
\item
Classify more types of Termination problems into the classes Decidable and Undecidable.
It would be of interest
to get a more refined classification. Some of the undecidable problems may be equivalent to HALT
while others may be complete in some level of the arithmetic hierarchy or $\Pi_1^1$ complete
\item
Prove or disprove the following conjecture: for every $c$ there is a program $P_c$ such that,
over $RCA_0$, $TRT(c) \iff $ every run of Program $P_c$ terminates.
\end{enumerate}

\section{Summary}

In this survey we discussed various ways to prove that a program always terminates.
The techniques used 
were well-founded orderings, Ramsey Theory, and Matrices.
These techniques work on some programs but not all programs.
We then discussed classes of programs where decidabilty of termination
has been proven. 

The applications of Ramsey Theory only used the transitive Ramsey Theorem.
We discussed the distinction between the two.

Lastly, we listed several open problems.

\section{Acknowledgments}

I would like to thank 
Daniel Apon,
Amir Ben-Amram,
Peter Cholak,
Byron Cook, 
Denis Hirschfeldt,
Jon Katz,
John Ouaknine,
Andreas Podelski, 
Brian Postow,
Andrey Rybalchenko, and
Richard Shore
for helpful discussions.
We would also like to again thank
Amir Ben-Amram for patiently explaining
to me many subtle points that arose
in this paper.
We would also like to thank Daniel Apon for a great
proofreading job.

\appendix

\section{Using Just $C_1$ and $C_2$ to Prove Termination}\label{se:amir}


\begin{definition}
If $\mathcal C$ is a set of square matrices of the same dimension then 
$\clos({\mathcal C})$ is the set of all finite products of elements of $\mathcal C$.
For example, if ${\mathcal C} = \{C_1,C_2\}$ then  $C_1^2C_2C_1^3C_2^{17} \in \clos(C_1,C_2)$.
\end{definition}

This section is due to Ben-Amram and is based on a paper of his~\cite{BA:delta}.
He gives an example of a proof of termination of Program~\ramsey where he uses the matrices $C_1,C_2$
that come out of Program~\ramsey directly (in contrast to our proof in Theorem~\ref{th:prog3matrix} which
used $3\times 3$ matrices by introducing $x+y$).
Of more interest: there {\it is} an element of $\clos(C_1,C_2)$ that has no negative numbers on the diagonal,
namely $C_2$ itself.
Hence we cannot use Theorem~\ref{th:matrix} to prove termination. 

\begin{theorem}\label{th:prog3macho}
Every computation of Program~\ramsey is finite.
\end{theorem}

\begin{proof}

The case control=1  is represented by the matrix
$$ C_1 = \mat{ -1 & 0 \\ \infty & \infty }. $$
The case control=2  is represented by the matrix
$$ C_2 = \mat{ \infty & -2 \\ +1 & \infty }. $$

We find a representation of a {\it superset} of $\clos(C_1,C_2)$.
Let 
$$\mathcal E =  \bigcup_Y {{\mathcal E}_Y} \text{where\ } Y \in \{C_1, C_2, C_1C_2, C_2C_1\}$$
and
$${\mathcal E}_Y =  \{ YZ^a,\ a\ge 1\}. $$
Thus $\mathcal E$ is an infinite set of matrices formed by uniting four classes, each of a simple structure (periodic sets,
in an appropriate sense of the word).
We show that $\clos(C_1,C_2)\subseteq {\mathcal E}$.
We prove this by induction on the number of matrices that are multiplied to form the element of $\clos(C_1,C_2)$.

The base case is trivial since clearly $C_1,C_2 \in {\mathcal E}$.

We show the induction step by 
multiplying each of the four ``patterns'' in $\mathcal E$ \emph{on the left} by each of the matrices $C_1,C_2$.
We use the following identities: $C_1^2 = ZC_1 = C_1Z = C_1C_2$, $C_2^2 = Z$, $ZC_2 = C_2Z$.
\begin{enumerate}
\item $ C_1(C_1Z^a) =  C_1^2 Z^a = C_1Z Z^a = C_1Z^{a+1}$
\item $ C_2(C_1Z^a) =  (C_2C_1)Z^{a}$
\item $ C_1(C_2Z^a) =  (C_1C_2)Z^{a}$
\item $ C_2(C_2Z^a) =  C_2^2Z^{a} = Z Z^a = Z^{a+1}$
\item $ C_1(C_1C_2Z^a) =  C_1^2C_2Z^{a} = C_1 Z C_2 Z^a= C_1 C_2 Z Z^a = (C_1 C_2)Z^{a+1}$
\item $ C_2(C_1C_2Z^a) =  C_2(C_1Z Z^{a}= (C_2C_1)Z^{a+1}$
\item $ C_1(C_2C_1Z^a) =  (C_1 C_2) C_1 Z^{a} = C_1 (Z C_1) Z^a = C_1^2 Z^{a+1} = C_1 Z^{a+2}$
\item $ C_2(C_2C_1Z^a) =  ZC_1Z^{a} = C_1Z^{a+1}$
\end{enumerate}

We have shown that $\clos(C_1,C_2)\subseteq {\mathcal E}$.
Next, we verify that for every class ${\mathcal E}_Y$, either every matrix in ${\mathcal E}_Y$, or every product of
a certain finite number of matrices in ${\mathcal E}_Y$, has a negative integer on the diagonal.  
This suffices for a proof of termination by Theorem~\ref{th:usetrans}, since every class induces a well-founded order
(if an order is not well-founded, every finite power of it is not well-founded either).
The second case occurs here only once (for the second class) and the negative number occurs already for a product
of two such matrices.
\begin{enumerate}
\item $M = C_1Z^a, \text{\ for some $a$\ } \Rightarrow M = \mat{-1-a & -a \\ \infty & \infty} $
\item $M_1 = C_2 Z^{a},\, M_2 = C_2 Z^{b} \Rightarrow M_1M_2 = C_2^2 Z^{a+b} = Z^{a+b+1}$
\item $M = C_1C_2Z^a \Rightarrow M = C_1Z Z^{a} = C_1 Z^{a+1}$
\item $M\in C_2C_1Z^a  \Rightarrow M = \mat{\infty & \infty \\ -3 & -2} Z^a = \mat{\infty & \infty \\ -3 & -2-a} $.
\end{enumerate}

\end{proof}

\section{A Verification that Needs The Full Ramsey Theorey}

The proof of Theorem~\ref{th:usetrans} seems to need the full strength of
Ramsey's Theorem (unlike the proof of Theorem~\ref{th:useramseywf}, see the note
following its proof). 
We give an example, due to Ben-Amram, of a program with a disjunctive well-founded transition invariant
where the coloring is not transitive.
Consider Program not-transitive

\begin{lstlisting}[frame=single,mathescape,title={Program not-transitive}]
$x = \inp(\Z)$
While $x>0$
		$x  = x \div 2$
\end{lstlisting}

It clearly terminates and you can use the transition invariant $\{(\x,\x') \st \x > \x' \}$ to prove it.
This leads to a transitive coloring.
But what if instead your transition-invariant-generator came up with the following rather odd relations instead:
\begin{enumerate}
\item
$T_1 =\{ (\x,\x') \st  \x >  3\x'\}$
\item
$T_2 =\{ (\x,\x') \st  \x >  \x'+1\}$
\end{enumerate}
Note that $T_1 \cup T_2$ is a disjunctive well-founded transition invariant.
We show that the coloring associated to $T_1\cup T_2$ is not transitive.
\begin{itemize}
\item
$COL(4,2)=2$. That is, $(4,2)\in T_2-T_1$.
\item
$COL(2,1)=2$. That is, $(2,1)\in T_2-T_1$.
\item
$COL((4,1)=1$. That is $(4,1)\in T_1$.
\end{itemize}
Hence $COL$ is not a transitive coloring.


\section{Ramsey's Theorem}

Ramsey Theory is a deep branch of combinatorics.
For two books on the sujbect see~\cite{RamseyInts,Ramsey}.

We will present the finite and infinite Ramsey theorem.
and also the finite and infinite transitive Ramsey theorem.
The only theorem used in this paper is the infinite transtive
Ramsey theorem; however, we give you more so you will have
some context. 

It is somewhat remarkable that this branch
of pure math has an application in programming languages.
See \url{www.cs.umd.edu/~gasarch/ramsey/ramsey.html} or
\cite{appramsey} for other applications of Ramsey Theory.
These applications are largely to other theorems in mathematics
or theoretical computer science. Hence one could argue that
the application to proving programs terminate is the first
{\it real} application.

\subsection{If There are Six People at a Party$\ldots$}

The following is well known recreational math problem:

\noindent
{\bf Question:}
Show that if there are six people at a party, either three of them
mutually know each other, or three of them mutually do not know each other.
We call such a set of people {\it homogenous} since they all bear the same relationship
to each other. We will call set of three either homogenous-K (all three pairs know each other) or 
homogenous-DK (none of the pair knows each other).

\medskip

\noindent
{\bf Solution:}
Let the people be $A,B,C,D,E,F$. Look at how $F$ relates to the rest: there must be either
$\ge 3$ that he knows, or $\ge 3$ that he does not know. We will assume that there
are $\ge 2$ that he knows (the other case is similar).

We can assume that $F$ knows $A,B$ and $C$. If any of $A,B,C$ know each other than we
have a homogenous-K set: $F$ and the pair of $A,B,C$ who know each other.
If none of $A,B,C$ know each other than we have a homogenous-DK set: namely $A,B,C$.
(End of Proof)

What if you only had five people at the party? Are you still guanteed a homogenous set?
No: Take $A,B,C,D,E$ where
the following pairs know each other: $(A,B)$, $(B,C)$, $(C,D)$, $(D,E)$, $(E,A)$, and the remaining
pairs do not know each other. We leave it to the reader that in this scenario there is no homogenous set.

What if you want to have a homogenous set of size four? It turns out that if there are 18 people
at a party there must be a homogenous set of size four; however, if there are 17 people at a party
there is a scenario where there is no homogenous set of size four.

What if you want to have a homogenous set of size five? It turns out that if there are 49 people
at a party there must be a homogenous set of size five; however, if there are 43 people at a party
there is a scenario where there is no homogenous set of size five. It is an open problem to determine
the exact number. 
See~\url{http://www.cs.umd.edu/~gasarch/BLOGPAPERS/ramseykings.pdf}
for an interesting take on the problem.

What if you want to have a homogenous set of size $m$? It turns out that if there is a large number $R(m)$
such that if there are $R(m)$ people at a party there must be a homogenous set. We will prove this.

What if you want to have an infinite (countable)  homogenous set? It turns out that there is an infinite 
number of people at a party\footnote{perhaps they all fit because person $i$ is of height $2^{-i}\times6$ feet
and of width $2^{-}i$ feet} then there is an infinite homogenous set. We will prove this.

We will now state this more mathematically and prove the last assertions, though in the reverse order.

\subsection{Notation}

\begin{note}
In the Graph Theory literature there are (at least) two kinds of coloring.
We present them in this note so that if you happen to read the literature
and they are using coloring in a different way then in these notes,
you will not panic.
\begin{itemize}
\item
Vertex Coloring. Usually one says that the vertices of a graph are
$c$-colorable if there is a way to assign each vertex a color,
using no more than $c$ colors, such that no two adjacent vertices
(vertices connected by an edge) are the same color.
Theorems are often of the form `if a graph $G$ has property BLAH BLAH
then $G$ is $c$-colorable' where they mean vertex c-colorable.
We {\bf will not} be considering these kinds of colorings.
\item
Edge Colorings. Usually this is used in the context of Ramsey Theory
and Ramsey-type theorems.
Theorems begin with `for all $c$-coloring of $K_n$ there exists BLAH such that BLAH.
We {\bf will} be considering these kinds of colorings.
\end{itemize}
\end{note}

Lets go back to our party!
We can think of the 6 people as vertices of~$K_6$.
We can color edge $\{i,j\}$ \RED if $i$ and $j$ know
each other, and \BLUE if they do not.

\begin{definition}
Let $n\ge 2$. Then $K_n$ {\it has a homogeenous $K_m$} if
there is a set $V'$ of $m$ vertices (in~$V$) such that
\begin{itemize}
\item 
there is an edge between every pair of vertices in $V'$:\linebreak $\{\{i,j\}\mid \;i,j\in V'\}\subseteq E$
\item 
all the edges between vertices in $V'$ are the same color: there is some $l\in[c]$ such that $COL(\{i,j\})=l$ for all $i,j\in V'$.
\end{itemize}
\end{definition}

\begin{notation}
$\KN$ is the graph ($V,E)$ where
\[
\begin{array}{rl}
V=&\nat\cr
E=&\{ \{x,y\} \mid x,y\in\nat \}\cr
\end{array}
\]
\end{notation}

We now restate our 6-people-at-a-party theorem:

\begin{theorem}\label{th:ramsey2}
Every 2-coloring of the edges of $K_6$ has
a homogenous set of size 3.
\end{theorem}

The {\it finite Ramsey's Theorem}, usually called {\it Ramsey's Theorem}, is as follows:

\begin{theorem}
For all $c$, for all $m$, there exists an $n$ such that
every $c$-coloring of the edges of $K_n$ has
a homogenous set of size $m$.
\end{theorem}

The {\it infinite Ramsey's Theorem} is as follows:

\begin{theorem}
For all $c$,
Every $c$-coloring of the edges of $\KN$ has
an infinite  homogenous set.
\end{theorem}

We need a way to state these theorems more succintcly.
We introduce some notation.

\begin{notation}~
\begin{enumerate}
\item
If $A$ is a set then $\binom{A}{2}$ is the set of all unordered
pairs of distinct elements of $A$.
Note that the phrase {\it for all $c$-colorings of $K_n$} can now be states as
{\it for all $COL:\binom{[n]}{2}\into [c]$}.
\item
$R_c(m)$ is the least $n$ such that for any $c$-coloring of $\binom{[n]}{2}$
there is a homogenous set of size $m$.
$R(m)$ is $R_2(m)$. We have not shown that $R_c(m)$ exists; however,
we will state theorems like {\it $\ldots R_c(m) \le \ldots$} which will mean
that $R_c(m)$ exists and we have a bound for it.
\item
$R_c(\infinity)=\infinity$ means that for any $c$-coloring of $\binom{\nat}{2}$
there is an infinite homogenous set
\end{enumerate}
\end{notation}

In the sections below we state the infinite and finite Ramsey's Theorem 
using this notation.

\subsection{Proof of the Infinite Ramsey Theorem}

We will prove the infinite Ramsey Theorem. We prove this one first for
three reasons

\begin{enumerate}
\item
The infinite one is the only one that we use in this paper.
\item
The infinite one is {\it easier} to prove than the finite one. The combinatorist
Joel Spencer has said {\it infinite combinatorics is easier than finite combinatorics
since all of those messy constants go away.}
\item
We can derive the finite Ramsey Theorem (usually just called {\it Ramsey's Theorem})
from the infinite one. We will present this proof as well two more standard proofs.
\end{enumerate}

\begin{theorem}\label{th:inframsey}
$R(\infinity)=\infinity$.
\end{theorem}

\begin{proof}

Let $COL$ be a 2-coloring of $\KN$.
We define an infinite sequence of vertices, 
$$x_1,x_2,\ldots,$$
 and an infinite sequence of sets of vertices,
$$V_0,V_1,V_2,\ldots,$$ 
that are based on $COL$.

Here is the intuition: Vertex $x_1=1$ has an infinite number of edges coming out of it.
Some are \REDns, and some are \BLUEns.  Hence there are an infinite number of \RED edges coming out of $x_1$, or there are 
an infinite number of \BLUE edges coming out of $x_1$ (or both).  
Let $c_1$ be a color such that $x_1$ has
an infinite number of edges coming out of it that are colored~$c_1$.
Let $V_1$ be the set of vertices $v$ such that $COL(\{v,x_1\})=c_1$.
Then keep iterating this process.

We now describe it formally.

\[
\begin{array}{rl}
V_0  = & \nat\cr
x_1   =& 1 \cr
c_1 =  &
\begin{cases}
\RED & \hbox{ if }|\{ v\in V_0 \mid COL(\{v,x_1\})=\REDns\}| \hbox{ is infinite}\cr
\BLUE & \hbox{ otherwise }\cr
\end{cases} \cr
V_1 = & \{ v \in V_0 \mid COL(\{v,x_1\})=c_1 \} \hbox{ (note that $|V_1|$ is infinite)}\cr
\end{array}
\]

Let $i\ge 2$, and assume that $V_{i-1}$ is defined. We define $x_i$, $c_i$, and $V_i$:

\[
\begin{array}{rl}
x_i =& \hbox{ the least number in $V_{i-1}$} \cr
     & \cr
c_i = &
\begin{cases}
\RED & \hbox{ if } |\{ v\in V_{i-1} \mid COL(\{v,x_i\})=\REDns\}| \hbox{ is infinite}\cr
\BLUE & \hbox{ otherwise}\cr
\end{cases} \cr
V_i = & \{ v \in V_{i-1} \mid COL(\{v,x_i\})=c_i \} \hbox{ (note that $|V_i|$ is infinite)}\cr
\end{array}
\]

How long can this sequence go on for?
Well, $x_i$ can be defined if $V_{i-1}$ is nonempty.
We can show by induction that, for every $i$, $\;V_i$ is infinite.
Hence the sequence
$$x_1,x_2,\ldots,$$
is infinite.

Consider the infinite sequence 
$$c_1, c_2,\ldots$$
Each of the colors in this sequence is either \RED or \BLUEns. Hence there must be 
an infinite sequence $i_1,i_2,\ldots$ such that $i_1<i_2<\cdots$ and
$$c_{i_1} = c_{i_2} = \cdots $$
Denote this color by $c$, and consider the vertices
$$x_{i_1}, x_{i_2},  \cdots $$

It is easy to see they form an infinite homogenous set.

\end{proof}

We leave it as an easy exercise to prove $c$-color case:

\begin{theorem}
$R_c(\infinity)=\infinity$.
\end{theorem}

\subsection{Proof of the Finite Ramsey Theorem from the Infinite Ramsey Theorem}

\begin{theorem}\label{th:finfrinf2}
For every $m\ge 2$, $R(m)$ exists.
\end{theorem}

\begin{proof}
Suppose, by way of contradiction, that there is some $m\ge 2$ such that $R(m)$ does not exist. Then,
for every $n\ge m$, there is some way to color $K_n$ so that there is
no monochromatic $K_m$.
Hence there exist the following:

\begin{enumerate}
\item
$COL_1$, a 2-coloring of $K_m$ that has no monochromatic $K_m$
\item
$COL_2$, a 2-coloring of $K_{m+1}$ that has no monochromatic $K_m$
\item
$COL_3$, a 2-coloring of $K_{m+2}$ that has no monochromatic $K_m$
\item[{}] $\vdots$
\item[$j$.]
$COL_j$, a 2-coloring of $K_{m+j-1}$ that has no monochromatic $K_m$
\item[{}] $\vdots$
\end{enumerate}

We will use these 2-colorings to form a 2-coloring $COL$ of $\KN$
that has no monochromatic $K_m$.

Let $e_1,e_2,e_3,\ldots$ be a list of all unordered pairs
of elements of $\nat$ such that every unordered pair appears exactly once.
We will color $e_1$, then $e_2$, etc.

How should we color $e_1$?
We will color it the way an infinite number of the $COL_i$'s color it.
Call that color $c_1$.
Then how to color $e_2$?
Well, first consider ONLY the colorings that colored $e_1$ with color $c_1$.
Color $e_2$ the way an infinite number of those colorings color it.
And so forth.

We now proceed formally:

\[
\begin{array}{rl}
J_0 = & \nat \cr
      & \cr
COL(e_1) = &
\begin{cases}
\RED & \hbox{ if } |\{ j \in J_0 \mid COL_j(e_1)=\REDns \}| \hbox{ is infinite}\cr
\BLUE & \hbox{ otherwise}\cr
\end{cases} \cr
J_1 = & \{ j\in J_0 \mid COL(e_1)=COL_j(e_1) \}\cr
\end{array}
\]

Let $i\ge 2$, and assume that $e_1,\ldots,e_{i-1}$ have been colored.
Assume, furthermore, that $J_{i-1}$ is infinite and, for every $j\in J_{i-1}$,

\[
\begin{array}{rl}
COL(e_1) =&  COL_j(e_1)\cr
COL(e_2) =&  COL_j(e_2)\cr
\vdots    &  \cr
COL(e_{i-1}) =&COL_j(e_{i-1})\cr
\end{array}
\]

We now color $e_i$:

\[
\begin{array}{rl}
COL(e_i) = &
\begin{cases}
\RED & \hbox{ if } |\{ j \in J_{i-1} \mid COL_j(e_i)=\REDns \}| \hbox{ is infinite}\cr
\BLUE & \hbox{ otherwise}\cr
\end{cases} \cr
J_i = & \{ j\in J_{i-1} \mid COL(e_i)=COL_j(e_i) \}\cr
\end{array}
\]

One can show by induction that, for every $i$, $\;J_i$ is infinite.
Hence this process {\it never} stops. 

\smallskip

\noindent
{\bf Claim:} If $\KN$ is 2-colored with $COL$, then there is no
monochromatic $K_m$.

\smallskip

\noindent
{\bf Proof of Claim:}

Suppose, by way of contradiction, that there is a monochromatic $K_m$.
Let the edges between vertices in that monochromatic $K_m$ be
$$e_{i_1}, \ldots, e_{i_M},$$
where $i_1<i_2< \cdots < i_M$ and $M=\binom{m}{2}$. 
For every $j\in J_{i_M}$, $\;COL_j$ and $COL$ agree on the colors of those edges. Choose $j\in J_{i_M}$ so that all the vertices of the monochromatic $K_m$ are elements of the vertex set of $K_{m+j-1}$. Then $COL_j$ is a 2-coloring of the edges of $K_{m+j-1}$ that has a monochromatic $K_m$, in contradiction to the definition of $COL_j$.

\noindent
{\bf End of Proof of Claim}

Hence we have produced a 2-coloring of $\KN$ that has no monochromatic~$K_m$.
This contradicts Theorem~\ref{th:inframsey}.
Therefore,  our initial supposition---that $R(m)$ does not exist---is false.
\end{proof}

We leave it as an easy exercise to prove $c$-color case:

\begin{theorem}\label{th:finfrominfc}
For all $c$, for all $m$, $R_c(m)$ exists.
\end{theorem}

\subsection{A Direct Proof of the Finite Ramsey's Theorem}

The proof of Ramsey's theorem give for Theorem~\ref{th:finfrinf2}
did not give a bound on $R(m)$. The following proof gives a bound.
It is similar i spirit to the proof of Theorem~\ref{th:inframsey}.

\begin{theorem}\label{th:1finramsey2}
For every $m\ge 2$, $R(m)\le 2^{2m-2}$.
\end{theorem}
\begin{proof}

Let $COL$ be a 2-coloring of $K_{2^{2m-2}}$.
We define a sequence of vertices, 
$$x_1,x_2,\ldots,x_{2m-1},$$ 
and a sequence of sets of vertices,
$$V_0,V_1,V_2,\ldots,V_{2m-1},$$  
\noindent
that are based on $COL$.

Here is the intuition: Vertex $x_1=1$ has $2^{2m-2}-1$ edges coming out of it.
Some are \REDns, and some are \BLUEns.  Hence there are at least $2^{2m-3}$ \RED edges coming out of $x_1$,
or there are at least $2^{2m-3}$ \BLUE edges coming out of $x_1$. 

\medskip
Let $c_1$ be a color such that $x_1$ has
at least $2^{2m-3}$ edges coming out of it that are colored $c_1$.
Let $V_1$ be the set of vertices $v$ such that $COL(\{v,x_1\})=c_1$.
Then keep iterating this process.

We now describe it formally.

\[
\begin{array}{rl}
V_0= & [2^{2m-2}]\cr
x_1 =& 1 \cr
     &   \cr 
c_1 = &
\begin{cases}
\RED & \hbox{ if } |\{ v\in V_0 \mid COL(\{v,x_1\})=\REDns\}|\ge 2^{2m-3}\cr
\BLUE & \hbox{ otherwise}\cr
\end{cases} \cr
V_1 = & \{ v \in V_0 \mid COL(\{v,x_1\})=c_1 \} \hbox{ (note that $|V_1|\ge 2^{2m-3}$)}\cr
\end{array}
\]

Let $i\ge 2$, and assume that $V_{i-1}$ is defined. We define $x_i$, $c_i$, and $V_i$:

\[
\begin{array}{rl}
x_i =& \hbox{ the least number in $V_{i-1}$} \cr
     & \cr
c_i = &
\begin{cases}
\RED & \hbox{ if } |\{ v\in V_{i-1} \mid COL(\{v,x_i\})=\REDns\}|\ge 2^{(2m-2)-i};\cr
\BLUE & \hbox{ otherwise.}\cr
\end{cases} \cr
V_i = & \{ v \in V_{i-1} \mid COL(\{v,x_i\})=c_i \}\hbox{ (note that $|V_i|\ge 2^{(2m-2)-i}$)}\cr
\end{array}
\]

How long can this sequence go on for?
Well, $x_i$ can be defined if $V_{i-1}$ is nonempty.
Note that $$|V_{2m-2}|\ge 2^{(2m-2)-(2m-2)}=2^0=1$$
Thus if $i-1=2m-2$ (equivalently, $i=2m-1$), then $V_{i-1}=V_{2m-2}\ne\emptyset$, but there is no guarantee that $V_i$ $(=V_{2m-1})$ is nonempty.
Hence we can define 
$$x_1,\ldots,x_{2m-1}$$
Consider the colors
$$c_1, c_2,\ldots,c_{2m-2}$$
Each of these is either \RED or \BLUEns. Hence there must be at least $m-1$ of them that are
the same color.  Let $i_1,\ldots,i_{m-1}$ be such that $i_1<\cdots < i_{m-1}$ and
$$c_{i_1} = c_{i_2} = \cdots = c_{i_{m-1}}$$
Denote this color by $c$, and consider the $m$ vertices
$$x_{i_1}, x_{i_2},  \cdots,  x_{i_{m-1}},x_{i_{m-1}+1}$$
To see why we have listed $m$ vertices but only $m-1$ colors, picture the following scenario: You are building a fence row, and you want (say) 7 sections of fence. To do that, you need 8 fence posts to hold it up. Now think of the fence posts as vertices, and the sections of fence as edges between successive vertices, and recall that every edge has a color associated with it.

\medskip
\noindent
{\bf Claim:} The $m$ vertices listed above form a monochromatic $K_m$.

\noindent
{\bf Proof of Claim:}

First, consider vertex $x_{i_1}$. The vertices $$x_{i_2},\ldots, x_{i_{m-1}},x_{i_{m-1}+1}$$ are elements of $V_{i_1}$, hence the edges $$\{x_{i_1},x_{i_2}\},\ldots \{x_{i_1},x_{i_{m-1}}\},\{x_{i_1},x_{i_{m-1}+1}\}$$ are colored with $c_{i_1}$ ($=c$).

Then consider each of the remaining vertices in turn, starting with vertex~$x_{i_2}$. For example, the vertices $$x_{i_3},\ldots, x_{i_{m-1}},x_{i_{m-1}+1}$$ are elements of $V_{i_2}$, hence the edges $$\{x_{i_2},x_{i_3}\},\ldots \{x_{i_2},x_{i_{m-1}}\},\{x_{i_2},x_{i_{m-1}+1}\}$$ are colored with $c_{i_2}$ ($=c$).

\noindent
{\it End of Proof of Claim}
\end{proof}

Note that this is really the same proof as Theorem~\ref{th:inframsey} except that we had to
keep track of the constants. This is an excellent example of Joel Spencer's quote given above.

We leave it as an easy exercise to prove $c$-color case:

\begin{theorem}\label{th:1finramseyc}
For every $c$, $R_c(m)\le c^{cm-c+1}$.
\end{theorem}

\subsection{Another Direct Proof of the Finite Ramsey's Theorem}

We give an alternative proof of the finite Ramsey's theorem that is similar
in spirit to the original 6-people-at-a-party problem and yields slightly better bounds.
slightly better bounds.

Given $m$, we really want $n$ such that every 2-coloring of $K_n$
has a \RED $K_m$ or a  \BLUE $K_m$.  However, it will be useful
to let the parameter for \BLUE differ from the parameter for \REDns.

\begin{notation}
Let $a,b\ge 2$. Let $R(a,b)$ denote the least number, if it exists,
such that every 2-coloring of $K_{R(a,b)}$ has a \RED $K_a$ or
a \BLUE $K_b$.
Note that $R(m)=R(m,m)$.
\end{notation}

We state some easy facts.
\begin{enumerate}
\item 
For all $a,b$, $R(a,b)=R(b,a)$.
\item
For $b\ge 2$, $\;R(2,b)=b$: First, we show that $R(2,b)\le b$. Given any\linebreak 2-coloring of $K_b$, we want a \RED $K_2$ or a \BLUE $K_b$.  Note that a \RED $K_2$
is just a \RED edge. Hence EITHER
there exists one \RED edge (so you get a \RED $K_2$) OR all the edges are \BLUE
(so you get a \BLUE $K_b$). Now we prove that $R(2,b)=b$. If $b=2$, this is obvious. If $\mbox{$b>2$}$, then the all-\BLUE coloring of $K_{b-1}$ has neither a \RED $K_2$ nor a \BLUE $K_b$, hence $R(2,b)\ge b$. Combining the two inequalities ($R(2,b)\le b$ and $R(2,b)\ge b$), we find that $R(2,b)=b$.
\item
$R(3,3)\le 6$. (This is the 6-people-at-a-party theorem.)
\end{enumerate}

We want to show that, for every $n\ge 2$, $\;R(n,n)$ exists.
In this proof, we show something more: that for all $a,b\ge 2$, $\;R(a,b)$ exists.
We do not really care about the case where $a\ne b$, but that case will
help us get our result. This is a situation where proving more than you need
is easier.

\begin{lemma}\label{le:comb}
For all $x,y\ge 1$,
$\binom{x}{y-1} + \binom{x-1}{y-1} = \binom{x}{y}.$
\end{lemma}

\begin{proof}
One could prove this with algebra; however, we will prove it combinatorially.
How many ways are there to choose $y$ people out of $x$?
The answer is of course $\binom{x}{y}$.
We solve it a different way: consider one of the people, named Alice.
If we do not choose Alice then there are $\binom{x}{y-1}$ ways to choose $y$ people.
If we choose Alice then there are $\binom{x-1}{y-1}$ ways to choose $y$ people.
Hence there are 
$\binom{x}{y-1} + \binom{x-1}{y-1}$ was to choose $y$ people.
Hence
$\binom{x}{y-1} + \binom{x-1}{y-1}=\binom{x}{y}$.
\end{proof}

\begin{theorem}\label{th:2finramsey2}~
\begin{enumerate}
\item
For all $a,b\ge 3$: If $R(a-1,b)$ and $R(a,b-1)$ exist, then $R(a,b)$ exists and
$$R(a,b)\le R(a-1,b) + R(a,b-1)$$
\item
For all $a,b\ge 2$, $\;R(a,b)$ exists and $R(a,b)\le \binom{a+b-2}{a-1}$.
\item
For all $m\ge 2$, $R(m)\le \binom{2^{2m}}{\sqrt{m}}$.
\end{enumerate}
\end{theorem}

\begin{proof}

\smallskip

\noindent
{\bf 1:} Assume $R(a-1,b)$ and $R(a,b-1)$ exist.
Let 
$$n=R(a-1,b) + R(a,b-1)$$
Let $COL$ be a 2-coloring of $K_n$, and
let  $x$ be a vertex. Note that there are 
$$R(a-1,b) + R(a,b-1)  - 1$$
edges coming out of $x$ (edges $\{x,y\}$ for vertices $y$).

Let $\NRE$ be the number of red edges coming out of $x$, and let $\NBE$ be the number of blue edges coming out of $x$.
Note that
$$\NRE + \NBE  = R(a-1,b) + R(a,b-1)  - 1$$
Hence either  
$$\NRE \ge R(a-1,b)$$ 
or 
$$\NBE\ge R(a,b-1)$$

\medskip
There are two cases:

\noindent
{\bf Case 1:} $\NRE \ge R(a-1,b)$.  Let 
$$U=\{ y \mid COL(\{x,y\})=\REDns\}$$
$U$ is of size $\NRE \ge R(a-1,b)$.
Consider the restriction of the coloring $COL$ to the edges between vertices in $U$.
Since 
$$|U|\ge R(a-1,b),$$  
this coloring has a \RED $K_{a-1}$ or a
\BLUE $K_b$. 
Within Case 1, there are two cases:
\begin{enumerate}
\item
There is a \RED $K_{a-1}$. Recall that all of the edges in 
$$\{\{x,u\} \mid u\in U\}$$
are \REDns, hence all the edges between elements of the set $U\union \{x\}$ are \REDns, so they form a \RED $K_a$ and WE ARE DONE.
\item
There is a \BLUE  $K_{b}$. 
Then we are DONE.
\end{enumerate}

\noindent
{\bf Case 2:} $\NBE \ge R(a,b-1)$. Similar to Case 1.

\smallskip
\noindent
{\bf 2:} To show that $R(a,b)$ exists and $R(a,b)\le \binom{a+b-2}{a-1}$, we use induction on $n=a+b$. Since $a,b\ge 2$, the smallest value of $a+b$ is 4. Thus $n\ge 4$.

\smallskip

\noindent
{\bf Base Case:} $n=4$. Since $a+b=4$ and $a,b\ge 2$, we must have $a=b=2$.
From part 1, we know that $R(2,2)$ exists and $R(2,2)=2$. Note that 
$$R(2,2)=2 \le  \binom{2+2-2}{2-1}=\binom{2}{1}=2.$$

\smallskip
\noindent
{\bf Induction Hypothesis:} For all $a,b\ge 2$ such that $a+b=n$,
$\;R(a,b)$ exists and $R(a,b)\le \binom{a+b-2}{a-1}$.

\smallskip
\noindent
{\bf Inductive Step:} Let $a,b$ be such that $a,b\ge 2$ and $a+b=n+1$.

By Part 1,  the induction hypothesis, and Lemma~\ref{le:comb} we have

$$R(a,b) \le R(a,b-1)+R(a-1,b) \le \binom{a+b-3}{a-1} + \binom{a+b-3}{a-2}= \binom{a+b-2}{a-1}.$$

\smallskip

\noindent
{\bf 3:} By Part 2 $R(m,m) \le \binom{2m-2}{m-1}$. By Stirling's formula this can be bounded
above by $O(\frac{2^{2m}}{\sqrt m})$.
\end{proof}

We leave it as an easy exercise to prove $c$-color case:

\begin{theorem}\label{th:2finramseyc}
For every $c$, $R_c(a_1,\ldots,a_c)\le \binom{(\sum_{i=1}^c) - c}{a_1! a_2! \cdots a_c!}$.
\end{theorem}

\subsection{Our Last Word on Ramsey Numbers}

The best known asymptotic results for the $c=2$ case are by
Conlon~\cite{ramseyupper} who has shown 
$$R(m) \le \frac{2^{2m}}{m^{c\log s/\log\log s}}.$$
For some exact values of the Ramsey Numbers see Radziszowski's 
dynamic survey~\cite{ramseysurvey}.

What about lower bounds? 
\Erdos found the first nontrivial bound and in the process
invented the probabilitisc method. 

\begin{theorem}
$R(m) \ge \Omega(m2^{m/2})$.
\end{theorem}

\begin{proof}

Let $n=cm2^{m/2}$ where we determine $c$ later.

We need to find a 2-coloring of $\binom{[n]}{2}$
that has no homogenous set of size $n$.
Or do we? We only have to show that such a coloring {\it exists}.

We do the following probabilitistic experiment: for each edge
randomly pick RED or BLUE to color it (the probaility of each is 1/2).
We show that the probability the graph has a homogenous set of
size $m$ is less than one. Hence there exists a coloring with no
homogenous set of size $m$.

The number of colorings is $2^{\binom{n}{2}}$
The number of colorings that have a homogenous set of size $m$ is
bounded above by

$$\binom{n}{m} \times 2 \times 2^{{\binom{n}{2}-\binom{m}{2}}}.$$

Hence the probability that the coloring has a homogenous set of size $m$
is bounded above by

$$
\frac{\binom{n}{m}\times 2\times 2^{{\binom{n}{2}-\binom{m}{2}}}}{2^{\binom{n}{2}}}
=
\frac{\binom{n}{m}\times 2}{2^{-\binom{m}{2}}}
$$

Stirlings formula and algeba show that there is a choice for $m$
where this is less than one. 
\end{proof}

\begin{note}
If the above proof is done 
carefully then $c$ can be taken to be $\frac{1}{e\sqrt{2}}$. 
\end{note}

The probabilistic method is when you show something exists by showing
that the probabiliity that it does not exist is less than one.
It has many applications. See the book by Alon and Spencer~\cite{ASprob}.

\section{The Transitive Ramsey Theorem}

\subsection{A Common Math Competition Problem}

The following problem will likely appear on some math competition in 2014:

\noindent
{\bf Problem:} Find $x$ such that the following hold:
\begin{enumerate}
\item
All sequences of 2014 distinct real numbers has a monotone  subsequence of length $x$.
\item
There exists a sequence of 2014  distince real numbers that has a monotone subsequence of length $x+1$.
\end{enumerate}

\noindent
{\bf Solution:} $x=45$.

\noindent
1) Let $x_1,x_2,\ldots,x_{2014}$ be a sequence of 2014 distinct reals.
Assume, by way of contradiction,  that there is no monotone subsequence of length $45$.

We define a map from $[2014]$  to $[44]\times[44]$ as follows:
Map $x$ to the the ordered pair $(a,b)$ such that
(1) the longest increasing subsequence that ends at $x$ has length $a$.
(2) the longest decreasing subsequence that ends at $x$ has length $h$.

The map is 1-1: Assume, by way of contradiction, that if $i<j$ both map to $(a,b)$.
Assume that $x_i<x_j$ (the case of $x_i>x_j$ is similar).
The longest increasing subsequence that ends at $x_i$ has length $a$.
Since $x_i<x_j$, 
the longest increasing subsequence that ends at $x_j$ has length at least $a+1$.
Hence $j$ does not map to $(a,b)$. Contradiction.
Hence the map is 1-1.

The domain has size $2014$. The range has size $44\times 44=1936$. Hence there is a 1-1
map between a set of size 2014 and a set of size $<2014$, which is a contradiction.

\medskip

\noindent
2) We construct a sequence of length 2025 (longer than we need) that has no monotone
subsequence of length $46$.

Let $y_1 < y_2 < \cdots < y_{45}$ be numbers such that $y_i +46 < y_{i+1}$.

Consider the sequence

$y_1, y_1-1, y_1-2,\ldots, y_1-44,$

$y_2, y_2-1, y_2-2,\ldots, y_2-44,$

$\vdots$

$y_{45}, y_{44}-1, y_{44}-3,\ldots, y_{44}-44.$

This sequence has $45\times 45 = 2025$ elements.
We call each line a block. Within a block the only monotone subsequences are decreasing and are
of length $\le 45$. A monotone subsequence that uses different blocks must use one from each block
and be increasing. Such a sequence must be of length $\le 45$.

This problem and solution are a subcase of a theorem by 
\Erdos and Szekeres~\cite{ErdosSzek}. They showed the following:

\begin{itemize}
\item
For all $k$, for all
sequences of distinct reals of length $(k-1)^2+1$, there is either an increasing monotone subsequence of length $k$
or a decreasing monotone subsequence of length $k$. 
\item
For all $k$, there exists a 
sequences of distinct reals of length $(k-1)^2$ with neither an increasing monotone subsequence of length $k$
or a decreasing monotone subsequence of length $k$. 
\end{itemize}

\subsection{View in terms of Colorings}

Note that we can view a sequence $x_1,\ldots,x_n$ as a 2-coloring of $\binom{[n]}{2}$ via 

\begin{equation}
COL(i<j) =
\begin{cases}
\RED & \hbox{ if $x_i < x_j$ }\\
\BLUE & \hbox{ if $x_i > x_j$ }\\
\end{cases}
\end{equation}

Using Ramsey Theory we would obtain the weak result that there is montone subsequence of length roughly
$\log_2 n$. A modification of the solution above yields a montone subsequence of length roughly $\sqrt n$.
The key is that this is not juste any coloring--- its a transitive coloring. With that in mind we can
generalize the theorem of \Erdos and Szekeres.

\begin{definition}
A {\it transitive $c$-coloring of $\binom{[n]}{2}$ } is a mapping where if $COL(i,j)=COL(j,k)$ then
that color is also $COL(i,k)$.
\end{definition}

\subsection{The Transitive Ramsey Theorem}

\begin{definition}
Let $c\ge 1$ and $n\in\nat \cup \{\nat\}$.
Let $COL$ be a $c$-coloring of $\binom{[n]}{2}$
A set of vertices $V$ is a {\it monochromatic increasing  path with respect to $COL$} if
$V=\{v_1<v_2<\cdots \}$ 
and 
$$COL(v_1,v_2)=COL(v_2,v_3)=\cdots.$$
(If $G=K_n$ then the $\cdots$ stop at some $k\le n$.)
We will drop the {\it with respect to $COL$} if the coloring is understood.
We will abbreviate 
{\it monochromatic increasing  path } by \mipit from now on.
\end{definition}

\begin{definition}
$TRT_c(m)$ is the least $n$ such that any transitive $c$-coloring of $\binom{[n]}{2}$ has
a homogenous set. Note that by Ramsey's theorem (Theorem~\ref{th:1finramseyc}) $TRT_c(m)\le c^{cm-c+1}$.
(Using Theorem~\ref{th:2finramseyc} there is a slightly lower, but still exponential, upper bound.)
We will provide an alternative proof with a much smaller upper bound.
$TRT_c(\infinity)$ can be defined in the obvious say. By Ramsey's Theorem it exists and is $\infinity$.
We will supply an alternative proof that uses less machinery.
\end{definition}

\begin{theorem}\label{th:fintr}
$TRT_c(m)\le (m-1)^c+1$.
\end{theorem}

\begin{proof}

\noindent
1) Let $n=(m-1)^c+1$.
Assume, by way of contradiction, that there is transitive $c$-coloring
of $\binom{[n]}{2}$ 
that has no \mip of length $m$.

We define a map from $\{1,\ldots,n\}$ to $\{1,\ldots,m-1\}^c$ as follows:
Map $x$ to the the vector $(a_1,\ldots,a_c)$ such that
the longest mono path of color $i$ that ends at $x$ has length $a_i$.
Since there are no \mip's of length $m$ the image is a subset of $\{1,\ldots,m-1\}^c$.

It is easy to show that this map is 1-1.
Since $n>(m-1)^c$ this is a contradiction.

\bigskip

\noindent
2) $TRT_c(m) \ge (m-1)^c+1$.

Fix $m\ge 1$. We show by induction on $c$, that, for all $c\ge 1$, there exists a transitive $c$-coloring of 
$\binom{[n]}{2}$ that has no \mip of length $m$. 

\noindent
{\bf Base Case:} $c=1$. We color the edges of $K_{m-1}$ all RED. Clearly there is no \mip of length $m$.

\bigskip

\noindent
{\bf Induction Step:} Assume there is a transitive $(c-1)$-coloring $COL$ of the edges of $K_{(m-1)^{c-1}}$ 
that has no homogeneous set of size $m$. 
Assume that $RED$ is not used. Replace every vertex with a copy of $K_{m-1}$. 
Color edges between vertices in different groups as they were colored by $COL$.
Color edges within a group $RED$. It is easy to see that this produces a transitive $c$-coloring of the edges of
and that there are no \mip of length $m$.
\end{proof}

\begin{theorem}
$TRT_c(\infinity)=\infinity$
\end{theorem}

\begin{proof}
This is similar to the proof of part 1 of Theorem~\ref{th:fintr}.
\end{proof}


\end{document}